\documentclass[]{amsart}

\usepackage{graphicx} 
\usepackage[a4paper,left=35mm,right=35mm,top=30mm,bottom=30mm,marginpar=25mm]{geometry}
\usepackage{amsmath}
\usepackage{amsthm}
\usepackage{amssymb}
\usepackage{tikz, pgf}
\usepackage{xcolor} 
\usepackage[displaymath,mathlines]{lineno} 
\usetikzlibrary{decorations.markings}
\usepackage[english]{babel}
\usepackage[T1]{fontenc}
\usepackage[utf8]{inputenc}
\usepackage{amsfonts}
\usepackage{mathrsfs}
\usepackage{xfrac}
\usepackage{lmodern}
\usepackage{fix-cm}
\usepackage{listings}
\usepackage{fancyvrb}
\usepackage{enumerate}   
\usetikzlibrary{arrows}
\usepackage{float}
\usepackage{subcaption}

\makeatletter  \renewcommand\@biblabel[1]{#1.}  \makeatother

\allowdisplaybreaks

\newcommand{\cal}{\mathcal}


\newcommand{\R}{{\mathbb{R}}}

\newcommand{\lra}{\longrightarrow}

\newcommand{\rad}{2 pt}
\newcommand{\colo}{black}

\newcommand{\ep}{\varepsilon}


\DeclareMathOperator{\N}{\mathbb{N}}

\DeclareMathOperator*{\argmin}{arg\,min}

\newcommand{\cB}{{\mathcal B}}

\newcommand{\cI}{{\mathcal I}}

\newcommand{\cN}{{\mathcal N}}

\newcommand{\cD}{{\mathcal D}}
\newcommand{\cC}{{\mathcal C}}

\newcommand{\cP}{{\mathcal P}}

\renewcommand{\phi}{\varphi}

\renewcommand{\d}{{\delta}}

\renewcommand{\l}{\lambda}


\newtheorem{theorem}{Theorem}
\numberwithin{theorem}{section}
\newtheorem{proposition}{Proposition}
\newtheorem{example}{Example}

\newtheorem{lemma}{Lemma}

\newtheorem{definition}{Definition}

\newtheorem{remark}{Remark}

\author{Fabio Bagagiolo}
\address{Dipartimento di Matematica \\
Universit\`a di Trento\\
Via Sommarive, 14, 38123 Povo (TN) Italy}
\email{fabio.bagagiolo@unitn.it }

\author{Luciano Marzufero}
\address{Dipartimento di Matematica \\
Universit\`a di Trento\\
Via Sommarive, 14, 38123 Povo (TN) Italy}
\email{luciano.marzufero@unitn.it }

\thanks{This work was partially supported by MUR-PRIN2020 Project (No. 2020JLWP23) ``Integrated mathematical approaches to socio-epidemiological dynamics''}
\title[Time-dependent mean-field game on networks]{\Large \bf A time-dependent switching mean-field game on networks motivated by optimal visiting problems}

\subjclass{Primary 49N80; Secondary 05C21}
\keywords{mean-field games, switching, networks, optimal visiting, optimal path, impulsive continuity equations}
\begin{document}

\maketitle
\date{}
\begin{abstract}
Motivated by an optimal visiting problem, we study a switching mean-field game on a network, where both a decisional and a switching time-variable are at disposal of the agents for what concerns, respectively, the instant to decide and the instant to perform the switch. Every switch between the nodes of the network represents a switch from $0$ to $1$ of one component of the string $p=(p_1,\ldots,p_n)$ which, in the optimal visiting interpretation, gives information on the visited targets, being the targets labeled by $i=1,\ldots,n$. The goal is to reach the final string $(1,\ldots,1)$ in the final time $T$, minimizing a switching cost also depending on the congestion on the nodes. We prove the existence of a suitable definition of an approximated $\varepsilon$-mean-field equilibrium and then address the passage to the limit when $\varepsilon$ goes to $0$.

\end{abstract}


\section{Introduction}
An optimal visiting problem in $\mathbb R^d$ is an optimal control problem where an agent has to visit (touch) a finite number of fixed targets (regions of $\mathbb R^d$) minimizing a suitable cost. The associated mean-field game problem may consist in considering a huge population of agents (even infinitely many) with the same goal and with the costs also depending on the congestion of the population. In order to write a Dynamic Programming Principle for the optimal visiting problem, some additional state-variables, taking into account which targets have been already visited or not, must be inserted. Such variables may be for example switching quantities as strings of $0$ and $1$, where $1$ in the $i$-position means that the target $i$ has been already visited and viceversa for $0$. Hence, starting from the string $p_o=(0, \ldots, 0)$, the goal can be seen as obtaining the string $\bar p=(1, \ldots, 1)$ paying as less as possible. Since all the possible strings $p$ are in a finite number and the switches must follow a hierarchical admissibility criterium, we interpret them as nodes of a direct network, where $p_o$ is the origin and $\bar p$ is the final destination. The problem can be seen then as the search for an optimal origin-destination path. Due to the dynamical feature of the optimal visiting problem in $\mathbb R^d$, in our network switching representation we keep the possibility for the agent to choose the sequence of instants to perform the switches, within a fixed time $T>0$. Again, the associated mean-field game consists in a huge population of agents where the choice of the optimal path is also affected by a congestion cost.
In particular, the agents want to touch or spend time on the nodes of a network, which represent the information on the visited targets, avoiding queues and congested spots. Inspired by the dynamical model in \cite{BFMP}, here we present a model in a pure switching form which, in some way, takes anyway into account a primitive structure of a continuous dynamics along the paths of a network (which is not present here). The main goal is to prove the existence of a mean-field equilibrium.

Our idea is then to study both the single-player problem and the crowd one without a real dynamics, i.e., without a controlled continuous trajectory for visiting the targets of the problem. For the single-player one, the state of the system is represented by a discrete variable $p$, which basically corresponds to the node of the network on which the agent is. Such a variable acts also as a switching discrete control at the agent disposal, that is, once the agent is on the node $p$, it has to choose optimally the next admissible subsequent node $p'$ after $p$. In this way, the agent switches to $p'$ and the state of the system becomes $p'$. In performing such a switch, the agent incurs a switching cost. A time-variable is accounted for the problem too. In particular, besides the switching discrete control variable $p$, the agent has to optimally choose the optimal time it is convenient to switch to the next node of the network. Moreover, all the admissible switches have to be performed within the fixed time $T$: if the agent reaches the final node before $T$, it pays an earliness penalization cost, while if it does not reach the final node and the time is over, it pays a time-loseness penalization cost.

In the mean-field case, we study the behavior of infinitely many players that have to solve the same single optimization problem as above, with the add of some kind of congestion cost dependence in the switching costs. After studying the single-player optimization problem and the properties of the corresponding value function, we face the problem of the existence of a mean-field equilibrium. This is done by performing a suitable fixed-point procedure for an approximated problem, and then we address the passage to the limit in the approximation. We need first an approximated problem because the switching mass-evolution, solution of the mean-field equilibrium problem, turns out to be piecewise continuous (even piecewise constant in some particular case) and this fact makes the standard compactness and convexity requirements for fixed-point results lacking in our case. Moreover, possibly due to non-uniqueness of the optimal control, we have to work with set-valued functions and, similarly as in \cite{BFMP}, \cite{bertucci2}, we must consider agents splitting into fractions, each one of them following one of the optimal behaviors. That suitably constructed and rather new approximation leads then to Theorem \ref{maintheorem}, which proves the existence of an approximated mean-field equilibrium via a fixed-point procedure for a suitable set-valued map. The passage to the limit in the approximation is then investigated in Theorem \ref{casoepsilon}, by assuming a suitable hypothesis on the optimal switching instants. Anyway, such a hypothesis can be satisfied by requiring some suitable conditions on the costs (see Remark \ref{esempiotempi}, the comments in \S\ref{generalcase} and Appendix \ref{appendixB}).

As regards the uniqueness of the equilibrium, usually, in the mean-field games theory, it is guaranteed by imposing a kind of monotonicity condition satisfied by the costs with respect to the mass of the agents (see \cite{lasrylions}). In several cases, the adaptation of that property to uniqueness results does not require too much work because the studied problem almost naturally fits that condition. Our problem, due to many of its aspects, does not provide instead an immediate evident way to adapt such a condition. However, inspired by \cite{lasrylions}, in Appendix \ref{appendix}, using a monotonicity-type property, we give some easy examples and calculations which seem to be promising for a future and deeper study of the uniqueness.

In general, as aforementioned, the study of single-player optimal visiting problems requires an hybrid control framework in order to recover a dynamic programming property and hence to derive an Hamilton-Jacobi equation. More precisely, it requires a special framework able to include a memory of the targets already visited. The need of that memory feature, associated with the optimal visiting, dynamic programming and Hamilton-Jacobi equations has been presented in \cite{bagfesmar1, bagfesmar2}, where additional discrete state-variables were introduced. The use of a switching/discontinuous/hybrid memory, as in the present paper, was also used for a one-dimensional optimal visiting problem on a network in \cite{BFMP}, which basically inspired our model.

The model for a crowd of indistinguishable players is taken from the framework of mean-field games \cite{laslio,huamalcai,Gomes,Carda}, while the adaptation of the same hybrid structure to networks has been only very recently attempted, as in \cite{BFMP, bagmagpes} and, more generally, in \cite{camilli1, camilli2}. For other studies in mean-field games on networks, see also the recent preprint \cite{Gomes1}. Some works which share the same ideas to treat the mean-field case in the presence of switches in the dynamics of the problem are \cite{bertucci1, bertucci2} and \cite{bagfesmar2}, where a mean-field optimal stopping problem, possibly with sinks and sources, is discussed, and \cite{festa2018mean}, where a hybrid mean-field game is presented to model a multi-lane traffic flux of vehicles. Moreover, other applications of a similar mean-field model can be found in \cite{mor}, where a continuous and a discrete set of switching labels are introduced to study the case of a leader-follower dynamics.
For what concerns applicative motivations of our model, as already said, we start by the overlying optimal visiting problem. For instance, such a problem complies with tourists' flow which has to visit several points of interest both in an heritage city and in a museum environment (see \cite{BFMP}, \cite{cri} and the references therein). In \cite{bovy} it is instead given an example of a situation where, in a crowded environment, people have to perform a sequence of different operations in different places, such as in big airports or train stations. Still concerning the optimal visiting problem, but related to a single-player one, in \cite{bagfesmar3} it is given an example of a framework which is used to solve a series of applied problems arising from the sport of orienteering races.

Finally, as we already said, we rewrite the optimal visiting problem as an origin-to-destination one on a network, and of course the possible applications and literature on this kind of problems are very huge. However, our model seems to have other interesting applicative aspects as described here below.
\par\medskip
\noindent {\it Other possible interpretations and applications of the model.} Besides the mainly motivating optimal visiting problem, another possible interpretation of our model is as a mean-field game for the so-called (single-stage) optimal job scheduling or for the similar open-shop scheduling problem in operations research (see for example \cite{Pinedo, GS}).
In this model, every agent represents a so-called job scheduler that has to produce its own optimal schedule. The machines, given as datum of the problem, are supposed identical and they can be interpreted as the targets of the visiting problem and then as the nodes of the network. The jobs are also given as a datum and they are the tasks that every job scheduler has to perform on each machine within the fixed time $T$. The optimal time and the optimal node chosen by the agents in the optimization process represent the processing time of a job (or of one or more operations) that has to be worked on a machine. Furthermore, if an agent reaches the final node (which means to have worked on all machines) before time $T$, then it has to pay a penalization cost: every job scheduler has to spend enough time on each machine to perform its job (or operations) and going faster may be penalizing. In the mean-field game formulation we may have a huge number of job-scheduler (agents) and hence, differently from the standard assumptions in the job scheduling, every machine has to be able to work more than one job (or more than one operation of a job) at a time. However, as usual, every job (or operation) can not be processed simultaneously at more than one machine. Then, the goal of every job scheduler is to optimize its schedule, minimizing a cost which, among others, penalizes queues and job-congestion on each machine. In some sense, the agents have to possibly use the ``most available'' machine. In the mean-field equilibrium situation, the job schedulers perform their optimal schedule: the best allocation of every job to the available machines together with the corresponding optimal processing times.

Still in the scheduling-like framework, we believe that another possible interpretation of the problem may be as an optimal co-flow scheduling, possibly with a deadline (see for example \cite{coflow}). In this model, several prescribed units of data (the demand) must be transferred from some sources to some sinks (nodes of a network) along some prescribed channels (edges of the networks) with fixed capacity. Each one of those transfers is a single flow. A co-flow is a set of a finite number of single flows and it has its own degree of priority. The optimization problem is to schedule all the single flows, without violating the capacity constraint, and minimizing the completion times of the co-flows, averaged by their priorities.

The paper is organized as follows. In Section \ref{optvisprob}, we introduce the time-dependent optimal switching problem, justified by an optimal visiting one, for a single agent and for a crowd, giving all the theoretical elements and hypotheses that motivate the use of a switching feature on a network. In Section \ref{fixedmass}, we study the well-position of such a problem with fixed mass, i.e, as a single-player optimization problem, showing the regularity of the value function and a dynamic programming property. In Section \ref{flowsofag}, we start the study for a population of agents by formally introducing the continuity equations for the flow and a suitable interpretation of a possible solution. Then, in Section \ref{apprmfg}, we introduce the mean-field game system of our problem, by proving at first the existence of an approximated $\ep$-mean-field equilibrium through a fixed-point procedure. Finally, in Section \ref{casolimite}, we address the passage to the limit as $\ep\to0$.

\section{The time-dependent optimal switching problem on the network}
\label{optvisprob}
Let $\{\cN_j\}_{j=1,\ldots,N}\subset\R^d$ be the collection of $N$ targets of the optimal visiting problem. As explained in the Introduction, we consider the set of the $N$-strings $p=(p^1, p^2, \ldots, p^N)\in\cI=\{0, 1\}^N$, which has cardinality equal to $2^N$, and which we interpret  as the nodes of our network. In particular, $p^i=1$ means that the $\cN_i$ has already been visited and viceversa for $p^i=0$.
The node $(1, 1, \ldots, 1)$ is the final destination and, once reached, the game ends.

By the meaning of the strings $p$, at every switch, just one component may change and it can do that only from $0$ to $1$. Such a component corresponds to the visited target. For example, for $N=4$ targets, if $p_1=(1, 0 ,0, 0)$, $p_2=(1, 1, 0, 0)$, $p_3=(0, 1, 1, 0)$ and $p_4=(1, 1, 1, 0)$, then from $p_1$ we can not switch to $p_3$ otherwise we lose the information that the first target has been already visited. Moreover we can not switch to $p_4$ directly since, as we said, at every switch just one component flips.

Hence, to any $p\in\cI$ we associate the number $k_p$ given by the sum of the components of $p$, that is $k_p=p^1+\ldots+p^N$. In other words, $k_p$ is the number of ``$1$'' in $p$, that is the number of the visited targets. Then, for any $p\in\cI$, we denote by $\cI_p$ the set of all possible new variables (nodes) in $\cI$ after a switch from $p$:
$$
\cI_p:=\{\tilde p\in\cI:\text{for every } i=1,\ldots,N, \ \tilde p^i=p^i+1\ \text{ if }\ p^i\neq1\ \text{ and }\ k_{\tilde p}=k_p+1\}.
$$
We observe that, in particular, $\cI_{\bar p}=\emptyset$, where $\bar p=(1, 1, \ldots,1)$.
\begin{example}
For $N=3$ targets, all the possible ways to visit them are $N! = 3! = 6$ as we can see in Figure \ref{figure1}. Hence our corresponding direct network is represented in Figure \ref{figure2}, where $p_o=(0, 0, 0)$ is the origin and $\bar p=(1, 1, 1)$ is the final destination. We then have for example $\cI_{p_o}=\{(1, 0, 0), (0, 1, 0), (0, 0, 1)\}$ and $\cI_{\tilde p=(0, 0, 1)}\{(1, 0, 1), (0, 1, 1)\}$.
\begin{figure}
\centering
\begin{subfigure}{0.25\textwidth}
\begin{tikzpicture}[very thick,decoration={
    markings,
    mark=at position 1 with {\arrow{>}}}
    ]
\filldraw[\colo] (0,3) circle (\rad) node [anchor=east] {$\cN_1$};
\filldraw[\colo] (1,4) circle (\rad) node [anchor=south] {$\cN_2$};
\filldraw[\colo] (2,3) circle (\rad) node [anchor=west] {$\cN_3$};
\node at (1,1.5) {\Huge $\ast$};

\draw[postaction={decorate}] (0.8,1.8) --(0.1,2.85);
\draw[postaction={decorate}] (0.1,3.1) --(0.8,3.8);
\draw[postaction={decorate}] (1.2,3.8) --(1.8,3.2);
\end{tikzpicture}
\caption{First way}
\end{subfigure}%
\hspace{7pt}
\begin{subfigure}{0.25\textwidth}
\begin{tikzpicture}[very thick,decoration={
    markings,
    mark=at position 1 with {\arrow{>}}}
    ]
\filldraw[\colo] (0,3) circle (\rad) node [anchor=east] {$\cN_1$};
\filldraw[\colo] (1,4) circle (\rad) node [anchor=south] {$\cN_2$};
\filldraw[\colo] (2,3) circle (\rad) node [anchor=west] {$\cN_3$};
\node at (1,1.5) {\Huge $\ast$};

\draw[postaction={decorate}] (1,1.8) --(1,3.7);
\draw[postaction={decorate}] (0.8,3.8)--(0.1,3.1) ;
\draw[postaction={decorate}] (0.2,3) --(1.8,3);
\end{tikzpicture}
\caption{Second way}
\end{subfigure}%
\hspace{7pt}
\begin{subfigure}{0.25\textwidth}
\begin{tikzpicture}[very thick,decoration={
    markings,
    mark=at position 1 with {\arrow{>}}}
    ]
\filldraw[\colo] (0,3) circle (\rad) node [anchor=east] {$\cN_1$};
\filldraw[\colo] (1,4) circle (\rad) node [anchor=south] {$\cN_2$};
\filldraw[\colo] (2,3) circle (\rad) node [anchor=west] {$\cN_3$};
\node at (1,1.5) {\Huge $\ast$};

\draw[postaction={decorate}] (1,1.8) --(1,3.7); 
\draw[postaction={decorate}]  (1.8,3) -- (0.2,3);
\draw[postaction={decorate}] (1.2,3.8)-- (1.8,3.2);
\end{tikzpicture}
\caption{Third way}
\end{subfigure}\\ \ \\ \ \\
\begin{subfigure}{0.25\textwidth}
\begin{tikzpicture}[very thick,decoration={
    markings,
    mark=at position 1 with {\arrow{>}}}
    ]
\filldraw[\colo] (0,3) circle (\rad) node [anchor=east] {$\cN_1$};
\filldraw[\colo] (1,4) circle (\rad) node [anchor=south] {$\cN_2$};
\filldraw[\colo] (2,3) circle (\rad) node [anchor=west] {$\cN_3$};
\node at (1,1.5) {\Huge $\ast$};

\draw[postaction={decorate}] (1.2,1.8) --(1.9,2.85); 
\draw[postaction={decorate}] (0.8,3.8) -- (0.1,3.1); 
\draw[postaction={decorate}]  (1.8,3.2)--(1.2,3.8); 
\end{tikzpicture}
\caption{Fourth way}
\end{subfigure}%
\hspace{7pt}
\begin{subfigure}{0.25\textwidth}
\begin{tikzpicture}[very thick,decoration={
    markings,
    mark=at position 1 with {\arrow{>}}}
    ]
\filldraw[\colo] (0,3) circle (\rad) node [anchor=east] {$\cN_1$};
\filldraw[\colo] (1,4) circle (\rad) node [anchor=south] {$\cN_2$};
\filldraw[\colo] (2,3) circle (\rad) node [anchor=west] {$\cN_3$};
\node at (1,1.5) {\Huge $\ast$};

\draw[postaction={decorate}] (1.2,1.8) --(1.9,2.85);
\draw[postaction={decorate}] (0.1,3.1) --(0.8,3.8);
\draw[postaction={decorate}] (1.8,3)--(0.2,3);
\end{tikzpicture}
\caption{Fifth way}
\end{subfigure}%
\hspace{7pt}
\begin{subfigure}{0.25\textwidth}
\begin{tikzpicture}[very thick,decoration={
    markings,
    mark=at position 1 with {\arrow{>}}}
    ]
\filldraw[\colo] (0,3) circle (\rad) node [anchor=east] {$\cN_1$};
\filldraw[\colo] (1,4) circle (\rad) node [anchor=south] {$\cN_2$};
\filldraw[\colo] (2,3) circle (\rad) node [anchor=west] {$\cN_3$};
\node at (1,1.5) {\Huge $\ast$};

\draw[postaction={decorate}] (0.8,1.8)--(0.1,2.85);
\draw[postaction={decorate}] (0.2,3) --(1.8,3);
\draw[postaction={decorate}] (1.8,3.2)--(1.2,3.8);
\end{tikzpicture}
\caption{Sixth way}
\end{subfigure}
\caption{The six possible ways to visit all the three targets.}\label{figure1}
\end{figure}
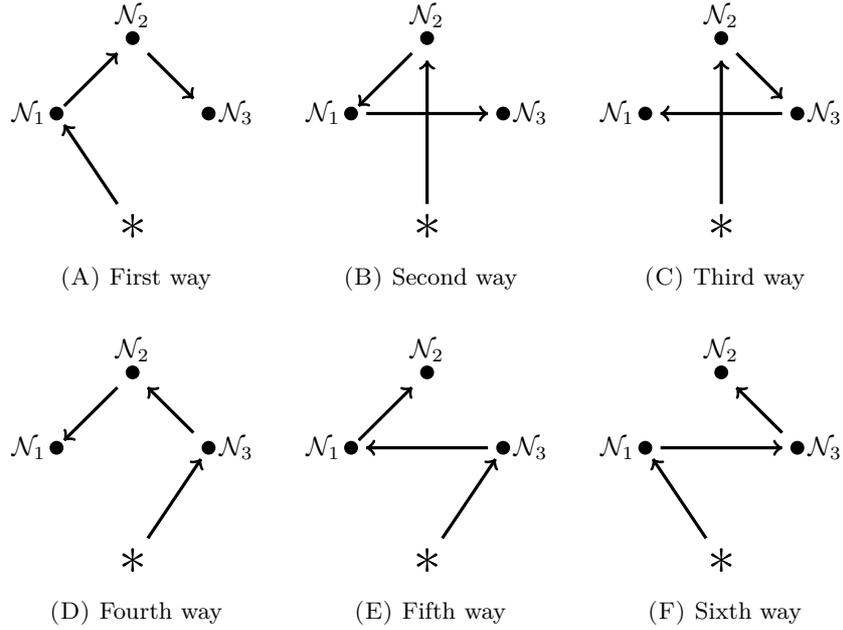

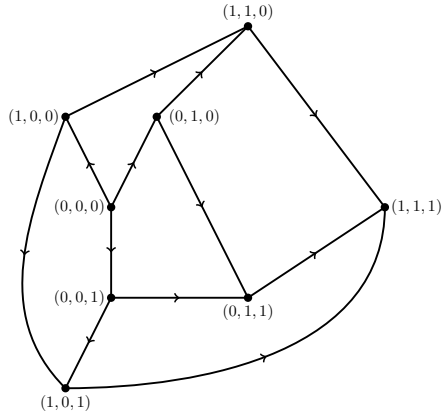
\begin{figure}[h]
\centering
\scalebox{0.6}{
\begin{tikzpicture}[very thick,decoration={
    markings,
    mark=at position 0.5 with {\arrow{>}}}
    ]
\draw[postaction={decorate}] (0,0)--(0,-2);
\draw[postaction={decorate}] (0,0)--(-1,2);
\draw[postaction={decorate}] (0,0)--(1,2);
\draw[postaction={decorate}] (0,-2)--(-1,-4);
\draw[postaction={decorate}] (0,-2)--(3,-2);
\draw[postaction={decorate}] (-1,-4) to[out=0,in=-90] (6,0);
\draw[postaction={decorate}] (-1,2)--(3,4);
\draw[postaction={decorate}] (1,2)--(3,4);
\draw[postaction={decorate}] (1,2)--(3,-2);
\draw[postaction={decorate}] (3,-2)--(6,0);
\draw[postaction={decorate}] (3,4)--(6,0);
\draw[postaction={decorate}] (-1, 2) to[out=-110] (-1, -4);
\filldraw[\colo] (0,0) circle (\rad)  node [anchor=east] {$(0,0,0)$};
\filldraw[\colo] (0,-2) circle (\rad)  node [anchor=east] {$(0,0,1)$};
\filldraw[\colo] (3,-2) circle (\rad)  node [anchor=north] {$(0,1,1)$};
\filldraw[\colo] (-1,-4) circle (\rad)  node [anchor=north] {$(1,0,1)$};
\filldraw[\colo] (-1,2) circle (\rad)  node [anchor=east] {$(1,0,0)$};
\filldraw[\colo] (1,2) circle (\rad)  node [anchor=west] {$\ (0,1,0)$};
\filldraw[\colo] (3,4) circle (\rad)  node [anchor=south] {$(1,1,0)$};
\filldraw[\colo] (6,0) circle (\rad)  node [anchor=west] {$(1,1,1)$};
\end{tikzpicture}
}
\caption{The direct network corresponding to $N=3$ targets.}\label{figure2}
\end{figure}
\end{example}

The possible optimal switching path from $p$ to $\bar p$ must be performed within a fixed final time $T>0$. However here we will assume that an agent at the time $T$ may be still on an intermediate node and then, in that case, it will pay a final cost. Hence, for an agent in the node $p\neq\bar p$ at time $t<T$, the number of the admissible subsequent switches is at most $N-\sum_ip^i\leq N$. The control at disposal of an agent in the node $p$ at time $t$ is then: the number of switches $0\leq r\leq N-\sum_ip^i$; the decision/switching instants $\sigma=(t=t_0< t_1<t_2<\ldots<t_r\leq T)$ and the switching path $\pi$ given by the sequence of the nodes $p=p_0, p_1,\ldots,p_{r}$, satisfying $p_1\in\cI_ {p_0}$, $p_{i+1}\in\cI_{p_i}$, $i=0,1,\ldots,r-1$. We assume that the choice $1\leq r<N-\sum_ip^i$ requires that $t_r=T$ and obviously $p_r\neq\bar p$ (because the number of switches $r$ is not sufficient in order to reach $\bar p$ from $p=p_0$). Moreover, if the choice is $r=0$, then, necessarily, either $p=p_0\neq\bar p$ and $t=t_0=T$ (that is the time is already over) or $p=p_0=\bar p$ and $t=t_0\leq T$ (that is the agent may still have time at disposal but instead no more switches: it is already on $\bar p$). In particular, this implies that an agent can not decide to permanently stand still on a node $p$ along a switching path unless $p=p_r=\bar p$ (or $t_r=T$). To resume, the control at disposal of the agent, which is in $p$ at time $t$, is a triple as
$$
(r, \sigma, \pi)=(r, t_0, t_1,\ldots, t_r, p_0, p_1, \ldots, p_r)
$$
where $t_0=t$ and $p_0=p$. Actually, it is the switching evolution inside the network at disposal of the agent with constraints as specified here above. For example, referring to the network in Figure \ref{figure2}, the following switching evolutions/controls are admissible
\begin{align*}
&\big(2, t_0, t_1, t_2\leq T, (0, 1, 0), (1, 1, 0), (1, 1, 1)\big),\\
&\big(2, t_0, t_1, t_2=T, (0, 0, 0), (1, 0, 0), (1, 0, 1)\big),
\end{align*}
whereas the following ones are not admissible
$$
\big(0, t_0<T, (1, 1, 0)\big),\quad\big(2, t_0, t_1, t_2<T, (0, 0, 0), (1, 0, 0), (1, 0, 1)\big).
$$
In particular, $t_0\ldots, t_{r-1}$ are seen as decision instants and $t_1, \ldots, t_r$ are seen as switching instants. That is the agent at time $t_i\in\{t_0, \ldots t_{r-1}\}$ decides to switch from $p_i$ to $p_{i+1}$ and to perform such a switch at the time $t_{i+1}\in\{t_1, \ldots, t_r\}$. Note that $t_1,\ldots t_{r-1}$ are both decision and switching instants, and this means that the decision about the next switch occurs exactly at the actual switching time.

The cost to be minimized is (note that by the argumentation above if $p\neq\bar p$ and $t<T$, then necessarily $r\geq 1$)
\begin{multline}
\label{costoJnuovo}
J(p, t, (r, \sigma, \pi), \rho)\\
=\begin{cases}\sum_{i=1}^rC(p_{i-1}, p_i, t_{i-1}, t_i, \rho)+\tilde C(p_r, t_r)&\text{if }p\neq\bar p,t<T\\
\tilde C(p, t)&\text{if }p=\bar p\text{ or }(p\neq\bar p, t=T)
\end{cases},
\end{multline}
where:
\begin{itemize}
\item[--] $\rho=\left(\rho_0,\ldots,\rho_{(2^N-1)}\right)\in L^2([0, T], [0, 1])^{2^N}$ is a $(2^N)$-uple of $L^2$ functions $\rho_j:[0, T]\lra[0, 1]$. Here we are using a possible enumeration of the nodes, and every $\rho_j(t)$ represents the mass of the agents at the $j$-node at time $t$. In particular, in the overlying optimal visiting problem, this gives the mass of agents with the same remaining targets to be visited as detected by the positions of the zeros in the string representing the node (we stress here again that $N$ is the number of targets in the overlying optimal visiting problem, and $2^N$ is the number of the $N$-strings of $0$ and $1$ bringing the information about the visited targets and that, in our network problem, represent the nodes of the network itself. See the Introduction and the beginning of Section \ref{optvisprob}.).
\item[--]
\begin{align*}
C:\cD\subset\cI\times\cI\times[0, T]\times]0, T]\times L^2([0, T], [0, 1])^{2^N}&\longrightarrow[0, +\infty[ \\
(p, p', t, \tau, \rho)&\longmapsto C(p, p', t, \tau, \rho)
\end{align*}
(where $\mathcal D$ is such that $(p,p',t,\tau\rho)\in\cal D$ if and only if $p'\in{\cal I}_p$ and $\tau>t$) is the cost function, that is the cost that an agent incurs when, at the (decision) time $t$, being on the node $p$, decides that it will switch to a new node $p'\in\cI_p$ at the (switching) time $\tau>t$. We assume that
\begin{itemize}
\item[$(i)$] For every $(p, p')\in\cI\times\cI_p$ and $\tau\in]0, T]$, the map $(t, \rho)\longmapsto C(p, p', t, \tau, \rho)$ is bounded and Lipschitz continuous in $[0,\tau-h]\times L^2([0,T],[0,1])^{2^N}$, for all sufficiently small $h>0$ and independently of $\tau$, that is, there exists $L>0$, depending only on $h$, such that
$$
|C(p, p', t', \tau, \rho')-C(p, p', t'', \tau, \rho'')|\leq L\left(|t'-t''|+\|\rho'-\rho''\|_{L^2([0, T], [0, 1])}\right);
$$
\item[$(ii)$] For every fixed $\rho$, $(p, p')\in\cI\times\cI_p$ and $t\in[0, T]$, $C$ is decreasing in $\tau\in]t,T]$ and $\lim_{\tau\rightarrow t^+}C(p, p', t, \tau, \rho)=+\infty$, uniformly with respect to $(p,p',t,\rho)$;
\item[$(iii)$] $C(p, p, \cdot, \cdot)=0$ for every $p\in\cI$ and $C(\cdot, \cdot, T, T)=0$. These assumptions correspond to the cases when the agent is on $p=p_r=\bar p$ and $t_r$ is not necessarily $T$ and when $t_r=T$ but the agent is on $p=p_r\neq\bar p$, and moreover give some kind of continuity of \eqref{costoJnuovo}.
\end{itemize}
\item[--] The cost $\tilde C$ is bounded and Lipschitz continuous in time and it represents the final cost that an agent incurs at the end of the switching path $(p_r, t_r)$. For example
\begin{itemize}
\item[--] If $t_r=T$, it depends on the number of the zeros in $p_r$ (that is the number of the remaining targets to be visited);
\item[--] If $p_r=\bar p$, it depends on the remaining time $T-t_r$ (that is the agent is penalized if $\bar p$ is obtained before $T$);
\item[--] If $p_r=\bar p$ and $t_r=T$, then it is null.
\end{itemize}
\end{itemize}
\begin{definition}
\label{optimallygen}
Let $p\in{\cal I}$, $p'\in{\cal I}_p$, and $t<T$ be fixed. We say that the switch from $p$ to $p'$ with decision instant $t$ optimally generates $\tau\in]t,T]$ as switching instant if there exists a control $(\bar r,\bar\sigma,\bar\pi)$, with $\bar r\ge1$, $\bar\sigma=(t_0=t,t_1=\tau,t_2,\dots,t_{\bar r})$ and $\bar\pi=(p_0=p,p_1=p',p_2,\dots,p_{\bar r})$ which minimizes the cost $J$ among all controls $(r,\sigma,\pi)$ such that $r\ge1$, $\sigma=(t_0=t,t_1,\dots,t_r)$, $\pi=(p_0=p,p_1=p',p_2,\dots,p_r)$. In other words: if whenever an agent in $p$ at the time $t$ decides to switch to $p'$ (independently of the optimality of such a choice) then $\tau$ is an optimal choice as switching instant.

We denote by $\varphi_{p,p'}:t\longmapsto\varphi_{p,p'}(t)=\tau$ the function that, for all $p,p'$ fixed, gives, for any $t$, the optimally generated switching $\tau$.
Note that the optimally generated $\tau$ may be not unique and hence the function $\varphi_{p,p'}$ may be multivalued. Also note that $\varphi_{p,p'}$ depends on $\rho$ too. In the sequel, for notational convenience, we will often drop the index ${(p,p')}$ and simply write $\varphi$.
\end{definition}

Moreover, other modeling assumptions are the following:
\begin{itemize}
\item[$(iv)$] If at the decision time $t$, an agent in a node $p$ chooses the switching time $\tau$ in order to switch to $p'$, then, in the time interval $[t, \tau[$, it is assumed that such an agent continues to concur to the total mass present in the node $p$ (coherently with the fact that the switch will occur at time $\tau$ and hence the agent will be on $p$ in the time interval $[t, \tau[$). However, the agent can not change its decision (switching to $p'$ at time $\tau$) or take other decision in the time interval $]t,\tau[$. In other words, in the time interval $]t, \tau[$ it must stay on $p$;
\item[$(v)$] For the switching from $p$ to $p'$, if we have two different decision times $t_1$, $t_2$ with $t_1<t_2$, which optimally generate the switching times $\tau_1, \tau_2<T$ respectively (see Definition \ref{optimallygen}), then $\tau_1<\tau_2$.
\end{itemize}
Assumption $(iv)$ suggests the following useful definition.
\begin{definition}
An agent which is in $p$ at time $t$ and uses the control
$$
(r, t_0=t, t_1,\ldots, t_r, p_0=p, p_1, \ldots, p_r)
$$
is called a decision-making agent at the decision instants $t_0,\ldots,t_{r-1}$. Actually, since there is no incoming flow in our network (all the agents are already present at $t=0$), all the agents are decision-making at $t=0$. In particular, any single agent will take a new decision, mandatory, at time $\tau$ when it will switch to the new node; in other words: all agents are decision-making at $t = 0$ and
they will return to be decision-making again exactly when, and only when, they switch to a new node.
\end{definition}
\begin{remark}
\label{unicoarrivo}
Assumption $(ii)$ means that, if the switching time is too much close to the corresponding decision time, then the agent pays an high cost.

The second part of assumption $(iv)$ (the agent can not change the decision in $[t, \tau[$) is certainly due to the discrete feature of the time-dependent component $\sigma$ of the global control $(r,\sigma,\pi)$, but it may also be justified by the overlying optimal visiting problem where, when an agent is moving from one target to another then, under some assumptions, it is not optimal to change destination or to come back to the previous node (see \cite{BFMP}). Also, the interpretation as job scheduling may justify such an assumption.

From assumption $(v)$, it follows $(v')$: any optimal switching time less than $T$ originates from a unique decision time. This can be also directly proved by assuming further hypotheses (see Remark \ref{esempiotempi}). Moreover, suppose that the decision time $t$ optimally generates the switching times $\tau_1$, $\tau_2$ with $\tau_1<\tau_2$ for the switching from $p$ to $p'$. Then, in view of assumption $(v)$, in the time interval $[\tau_1, \tau_2[$ only the agents with decision time $t$ can switch from $p$ to $p'$. More generally, if we define $\tau^-:=\inf_{\tau}\{\text{$\tau$ is optimal for $t$}\}$ and $\tau^+:=\sup_{\tau}\{\text{$\tau$ is optimal for t}\}$, in the time interval $[\tau^-, \tau^+[$, only the agents with decision time $t$ can switch from $p$ to $p'$. Hence, we can consider the function $\phi:t\longmapsto\tau$, giving the optimal switching instant $\tau$ for the decisional instant $t$, as a maximal monotone graph filling the jumps by vertical segments, and so, in this case, $\phi$ is a multivalued function.

All the previous assumptions and arguments can be justified by a possible overlying optimal visiting problem with suitable energy and congestion costs (see \cite{BFMP}). See also Remark \ref{esempiotempi}.
\end{remark}
The value function of the problem is
\begin{equation}
\label{valuef}
V(p, t, \rho)=\inf_{(r, \sigma, \pi)}J(p, t, (r, \sigma, \pi), \rho)
\end{equation}
and a control $(r, \sigma, \pi)$ is said to be optimal for $(p, t)$ if $V(p, t, \rho)=J(p, t, (r, \sigma, \pi), \rho)$.
\begin{definition}
\label{optimalgen2}
Let $p\neq\bar p$, $t\in[0,T[$ and $\tau\in]t,T]$ be fixed. We say that $\tau$ is optimal for $V(p,t, \rho)$ if there exists a control $(\bar r,\bar\sigma,\bar\pi)$ with $\bar r\ge1$, $\bar\sigma=(t_0=t,t_1=\tau,t_2,\dots,t_{\bar r})$ and $\bar\pi=(p_0=p,p_1,p_2,\dots,p_{\bar r})$ which is optimal, that is minimizes the cost $J$ among all controls. In other words, there exists an optimal control whose first switching instant is $\tau$.
\end{definition}
Given next Proposition \ref{dynprogsw} (and in particular looking at its proof), the previous definition is equivalent to require that there exists $p'\in{\cal I}_p$ such that the pair $(p',\tau)$ realizes the minimum in
$$
V(p,t, \rho)=\inf_{\substack{p'\in\cI_{p}\\ \tau\in]t, T]}}\left\{V(p',\tau)+C(p,p',t,\tau,\rho)\right\}.
$$
\section{The optimal switching problem with fixed mass $\rho$}
\label{fixedmass}
In this section, we mostly assume that the mass $\rho\in L^2([0, T], [0, 1])^{2^N}$ is a priori fixed and then, when not needed, we do not display it as entry of the cost $J$ and of the value function $V$.
\begin{proposition}
\label{lipcontV}
For all $k>0$, the value function $V$ in \eqref{valuef} is bounded and Lipschitz continuous in $[0,T-k]$, uniformly in $\rho$. Moreover, if $\rho^n$ converges to $\rho$ in $L^2([0, T], [0, 1])$, then $V(p,\cdot,\rho^n)$ uniformly converges to $V(p,\cdot,\rho)$ on $[0,T-k]$, for all $p$. Also, if $t'^n$ is optimal for $V(p,t^n,\rho^n)$ and $t'^n$, $t^n$ converge to $t'$, $t<T$ respectively, then $t'$ is optimal for $V(p,t,\rho)$ (see Definition \ref{optimalgen2} for $t$ optimal).
\end{proposition}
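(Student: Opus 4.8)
The plan is to prove the three assertions in order, the common engine being a uniform lower bound on the lengths of the switching intervals used by any nearly optimal control. First I would establish boundedness. For $p\neq\bar p$ and $t\le T-k$ one has $T-t\ge k$, so one may reach $\bar p$ with at most $N$ switches placed at instants spaced by at least $k/N$; by the boundedness in $(i)$ (applied with $h=k/N$) each coupling cost is then $\le M_{k/N}$, and adding the bounded nonnegative final cost $\tilde C$ gives an upper bound $\bar M:=NM_{k/N}+\|\tilde C\|_\infty$, uniform in $\rho$, while $C\ge0$ and $\tilde C\ge0$ give $V\ge0$. The case $p=\bar p$ is immediate since there $V(p,t,\rho)=\tilde C(\bar p,t)$ is bounded, Lipschitz in $t$ and $\rho$-independent. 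Next comes the key gap lemma: for a control whose cost is within $1$ of $V$, hence $\le\bar M+1$, the uniform blow-up in $(ii)$ furnishes $\delta>0$ (depending on $\bar M$, thus on $k$, but not on $\rho$) with $\tau-t<\delta\Rightarrow C(p,p',t,\tau,\rho)>\bar M+1$; since every summand of $J$ is nonnegative, no switching interval of a nearly optimal control can be shorter than $\delta$. This confines all subsequent estimates to the region $\{\tau-t\ge\delta\}$ where $(i)$ is in force.

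The Lipschitz bound and the continuity in $\rho$ are then comparison arguments. For Lipschitz continuity it suffices, by chaining over overlapping subintervals, to treat $t'<t''$ with $t''-t'<\delta/2$. Taking a nearly optimal control for one endpoint and keeping its node-path and all its switching instants $s_1<\dots<s_{\bar r}$, while only replacing the initial decision time by the other endpoint, yields an admissible control (the first interval stays longer than $\delta/2$, since $s_1\ge t'+\delta$ or $s_1\ge t''+\delta$); the costs differ only in the first summand, and $(i)$ at fixed $\tau=s_1$ bounds that difference by $L(\delta/2)\,|t'-t''|$. Letting the optimality gap tend to $0$ gives both inequalities and a Lipschitz constant depending only on $\delta/2$, hence uniform in $\rho$. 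For the continuity in $\rho$, I would feed a nearly optimal control for $\rho$ into the cost for $\rho^n$ and conversely: the final cost is unchanged, each of the at most $N$ coupling costs changes by at most $L(\delta)\,\|\rho-\rho^n\|_{L^2}$ by the Lipschitz part of $(i)$, and the uniformity of $\bar M$ and $\delta$ in $\rho$ yields $|V(p,t,\rho^n)-V(p,t,\rho)|\le NL(\delta)\,\|\rho-\rho^n\|_{L^2}$ uniformly in $t\in[0,T-k]$.

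For the last assertion I would argue by compactness and stability of minimizers. Given optimal controls for $(p,t^n,\rho^n)$ with first switching instant $t'^n$, pass to a subsequence along which the number of switches, the whole node-path and the next node $p'$ are constant and all switching instants converge; the gap lemma guarantees that the limiting instants remain strictly increasing (every gap is $\ge\delta$), so the limit is an admissible control whose first switch is $t'$, and moreover $t'>t$. Writing the one-step optimality $V(p,t^n,\rho^n)=C(p,p',t^n,t'^n,\rho^n)+V(p',t'^n,\rho^n)$, which follows directly from optimality of the tail and does not invoke Proposition \ref{dynprogsw}, and using the continuity of $V$ in $(t,\rho)$ already proved, one gets $V(p,t^n,\rho^n)\to V(p,t,\rho)$ and $V(p',t'^n,\rho^n)\to V(p',t',\rho)$ (here $t'<T$; the degenerate case $t'=T$ is handled separately). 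Hence the coupling costs converge, $C(p,p',t^n,t'^n,\rho^n)\to V(p,t,\rho)-V(p',t',\rho)$, and it remains only to identify this limit with $C(p,p',t,t',\rho)$: once this is done, the one-step identity passes to the limit and exhibits $t'$ as an optimal switching instant in the sense of Definition \ref{optimalgen2}.

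I expect this identification, namely the continuity of the coupling cost in the switching variable $\tau$, to be the main obstacle, since the hypotheses give Lipschitz dependence of $C$ only in $(t,\rho)$ (assumption $(i)$) and mere monotonicity in $\tau$ (assumption $(ii)$). The half where $t'^n$ approaches $t'$ from below is immediate: monotonicity gives $C(p,p',t^n,t'^n,\rho^n)\ge C(p,p',t^n,t',\rho^n)$, whose right-hand side tends to $C(p,p',t,t',\rho)$ by $(i)$ at the fixed value $\tau=t'$; combined with the one-step sub-optimality $V(p,t,\rho)\le C(p,p',t,t',\rho)+V(p',t',\rho)$ this forces equality. The delicate case is an approach from above, where a downward jump of $\tau\mapsto C$ could in principle make $t'$ suboptimal while still being a limit of optimal instants; there I would invoke assumption $(v)$, i.e. the monotonicity of the optimally generated switching instants and the resulting maximal-monotone-graph structure recalled in Remark \ref{unicoarrivo}, to prevent the optimal instants from accumulating on the unfavorable side of such a jump, thereby securing $C(p,p',t,t',\rho)\le\lim_n C(p,p',t^n,t'^n,\rho^n)$ and closing the argument.
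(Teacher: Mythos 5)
Your treatment of the first two assertions is correct and is essentially the paper's own argument: your gap lemma (every switching interval of a nearly optimal control has length at least some $\delta>0$ uniform in $\rho$, extracted from the uniform blow-up in $(ii)$) is the paper's construction of $h$, and your Lipschitz estimate---transplanting a nearly optimal control from one decision instant to the other so that only the first summand of $J$ changes, then applying $(i)$ at the fixed value $\tau=s_1$---is exactly the paper's comparison. Two harmless deviations: for the $\rho$-dependence you prove the quantitative bound $|V(p,t,\rho^n)-V(p,t,\rho)|\le NL(\delta)\,\|\rho^n-\rho\|_{L^2}$ directly, whereas the paper invokes Ascoli--Arzel\`a plus pointwise convergence (your route is cleaner and yields Lipschitz dependence on $\rho$, which is more than is asked); and you derive the one-step identity $V(p,t^n,\rho^n)=C(p,p',t^n,t'^n,\rho^n)+V(p',t'^n,\rho^n)$ from optimality of the tail rather than citing Proposition \ref{dynprogsw}, which is what the paper does (with a remark on non-circularity). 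Also, the degenerate case $t'=T$ that you defer is genuinely easy: boundedness of $V(p,t^n,\rho^n)$ together with the uniform blow-up in $(ii)$ forces $p'=\bar p$ or $t'^n=T$ eventually, and either alternative passes to the limit.

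The genuine gap is the last step, and your own diagnosis of it is accurate: when $t'^n\downarrow t'$, assumptions $(i)$--$(ii)$ only give $\lim_n C(p,p',t^n,t'^n,\rho^n)=C(p,p',t,t'^{+},\rho)$, so a downward jump of $\tau\longmapsto C$ at $t'$ would make $t'$ strictly suboptimal while it remains a limit of optimal instants. But the repair you propose cannot work. Assumption $(v)$ compares the optimal instants generated by two \emph{distinct} decision times, and does so for one and the same $\rho$; it places no constraint on the set of optimal instants attached to a \emph{single} decision time, which Remark \ref{unicoarrivo} explicitly allows to be a whole interval $[\tau^-,\tau^+]$. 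Hence nothing in $(i)$--$(v)$ excludes the following configuration: $\rho^n=\rho$ and $t^n=t$ fixed, $\tau\longmapsto C(p,p',t,\tau,\rho)$ jumping down just after some $\bar\tau$, and $\tau\longmapsto C(p,p',t,\tau,\rho)+V(p',\tau,\rho)$ equal to its infimum on $\left]\bar\tau,\bar\tau+\epsilon\right]$ but strictly larger at $\bar\tau$ itself; then every $t'^n\in\left]\bar\tau,\bar\tau+\epsilon\right]$ is optimal, $t'^n\downarrow\bar\tau$, yet $\bar\tau$ is not optimal, and $(v)$ is untouched. So the step ``prevent the optimal instants from accumulating on the unfavorable side'' is not merely unproved: it cannot be proved from the stated hypotheses. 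What closes the argument is right-continuity (equivalently, given the monotonicity in $(ii)$, lower semicontinuity) of $C$ in the switching variable, after which your from-below argument works symmetrically from both sides.

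In fairness, the paper is in no better shape at this point: its proof of the final assertion is the single sentence that one ``can use the characterization of $V$ by Proposition \ref{dynprogsw}'', which silently performs the same limit in $C(p,p',t^n,t'^n,\rho^n)$, and Remark \ref{esistenzah} likewise asserts that the quantity inside the minimization is continuous on $[t+h,T[$; both statements presuppose continuity of $C$ in $\tau$, which hypotheses $(i)$--$(iii)$ never state. So you have isolated a genuinely implicit hypothesis of the paper: once it is made explicit your proof is complete, but the substitute you offer for it (assumption $(v)$) is not a valid replacement.
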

\begin{proof}
Using $(i)$ and $(ii)$ in Section \ref{optvisprob}, for all $t\in[0,T-k]$, it is, for all $p\neq\bar p$ and $p'\in{\cal I}_p$, $V(p,t,\rho)\le C(p,p',t,T,\rho)+\tilde C(p',T)\le C(p,p',t,t+k,\rho)+\tilde C(p',T)$, and this gives the boundedness. We take $0<h<k$ such that, for all $t\in[0,t_k]$, $p,p',\rho$ it is, for $t<\tau\le t+h$,
$$
C(p,p',t,\tau,\rho)>\left(\max_{p,p',t\in[0,T-k],\rho}C(p,p',t,t+k,\rho)+\tilde C(p',T)\right)
$$
and note that such $h$ exists for hypothesis $(ii)$ in Section \ref{optvisprob}. We then get that if the decisional instant $t\in[0,T-k]$ optimally generates the switching instant $\tau$ then $\tau\ge t+h$. Take $p$ and  $t',t''\in[0,T-k]$ such that $|t'-t''|<h$. For $\varepsilon>0$ let $(r,\sigma,\pi)$ be such that $V(p,t'')\ge J(p,t'',(r,\sigma,\pi))-\varepsilon$. Hence the control triple $(r,\sigma,\pi)$ is also admissible for $t'$ (all the instants in $\sigma$ are larger than $t'$, because their distance from $t'$ is at least $h$). We have
\begin{align*}
V(p,t')-V(p,t'')&=V(p, t')-V(p, t'')\leq J(p, t', (r, \sigma, \pi))-J(p, t'', (r, \sigma, \pi))+\ep\\
&=C(p, p_1, t', t_1)+\sum_{i=2}^rC(p_{i-1}, p_i, t_{i-1}, t_i)+\tilde C(p_r, t_r)\\
&\quad -C(p, p_1, t'', t_1)-\sum_{i=2}^rC(p_{i-1}, p_i, t_{i-1}, t_i)-\tilde C(p_r, t_r)+\ep\\
&=C(p, p_1, t', t_1)-C(p, p_1, t'', t_1)+\ep\leq L|t'-t''|+\ep,
\end{align*}
where $L$ is the Lipschitz constant of the cost $C$ (see assumption $(i)$), which is independent of $\rho$. By the arbitrariness of $\ep$, the compactness of $[0,T-k]$, and changing the role of $t'$ and $t''$, we get the Lipschitz continuity of $V$ in $[0,T-k]$, uniformly in $\rho$.

For the convergence of $V(p, \cdot, \rho^n)$, note that, by the previous points and by Ascoli-Arzel\`a Theorem, at least for a subsequence, we have the uniform convergence on $[0,T-k]$ to a limit function $\tilde V$. Taking $h>0$ as above (and hence, for all $t\in[0,T-k]$, the optimal $t'$ belongs to $[t+h,T]$), by the Lipschitz continuity hypotheses on $C$ and $\tilde C$ (in particular the continuity of $C$ with respect to $\rho\in L^2$), we get the point-wise convergence to $V(p,\cdot,\rho)$ in $[0,T-k]$, which then turns out to be the uniform limit $\tilde V$, independently of the subsequence. The final point on $t'^n, t^n$ and $t', t$ also comes because, being $t<T$, there exists $k>0$ such that, for large $n$, both $t,t^n$ belong to $[0,T-k]$ and, for example, we can use the characterization of $V$ by Proposition \ref{dynprogsw} which is independent of Proposition \ref{lipcontV}.
\end{proof}
\begin{proposition}
\label{dynprogsw}
The value function $V$ is the unique solution of the following
\begin{equation}
\label{hjsystem}
\begin{cases}
V(p, t)=\inf_{\substack{p'\in\cI_{p}\\t'\in]t, T]}}\{V(p', t')+C(p, p', t, t')\},&(p, t)\in(\cI\setminus\{\bar p\})\times[0, T[\\
V(\bar p, t)=\tilde C(\bar p, t),&t\in[0, T]\\
V(p, T)=\tilde C(p, T),&p\in\cI
\end{cases}.
\end{equation}
\end{proposition}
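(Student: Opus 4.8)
The plan is to prove the two halves of the statement separately: first that the value function $V$ of \eqref{valuef} does solve the system \eqref{hjsystem} (the interior equation being a dynamic programming principle, together with the two boundary conditions), and then that \eqref{hjsystem} has at most one solution. The two boundary conditions are immediate from the very definition of the cost \eqref{costoJnuovo}: when $p=\bar p$ the set $\cI_{\bar p}$ is empty and the only admissible control is $r=0$, while when $p\neq\bar p$ and $t=T$ there is no room for a switching instant $t_1>t_0=T$, so again $r=0$; in both cases $J$ reduces to $\tilde C$, giving $V(\bar p,t)=\tilde C(\bar p,t)$ and $V(p,T)=\tilde C(p,T)$. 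The substance is therefore the interior equation.

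For the dynamic programming equation I would exploit the purely additive structure of $J$ and the fact, noted just before \eqref{costoJnuovo}, that for $p\neq\bar p$ and $t<T$ every admissible control necessarily has $r\ge1$. Any such $(r,\sigma,\pi)$ then splits into its first switch — from $p=p_0$ to $p_1\in\cI_p$, with decision instant $t_0=t$ and switching instant $t_1=t'\in]t,T]$ — and a tail $(r-1,(t_1,\dots,t_r),(p_1,\dots,p_r))$, and additivity gives $J(p,t,(r,\sigma,\pi))=C(p,p_1,t,t')+J(p_1,t',\text{tail})$. The key bookkeeping is that this splitting is a bijection between admissible controls for $(p,t)$ with prescribed first switch and admissible tails for $(p_1,t')$: since $k_{p_1}=k_p+1$ the admissible number of switches from $p_1$ drops by exactly one, and the terminal requirements (the path reaching $\bar p$, or stopping at $t_r=T$ on a node $\neq\bar p$) transfer verbatim. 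Taking infima on both sides then yields the two inequalities, namely
\[
V(p,t)\ge\inf_{\substack{p'\in\cI_p\\ t'\in]t,T]}}\{V(p',t')+C(p,p',t,t')\}
\]
(peel the first switch off a near-optimal control for $(p,t)$) and the reverse one (concatenate the switch $p\to p'$ at time $t'$ in front of a near-optimal control for $(p',t')$), which together give the interior equation.

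Uniqueness is the clean part, and it rests on the observation that the network is a directed acyclic graph graded by $k_p=\sum_i p^i$: every $p'\in\cI_p$ satisfies $k_{p'}=k_p+1$, so the right-hand side of the interior equation involves the unknown only at nodes strictly above $p$ in the grading. I would argue by backward induction on $k_p$ from $N$ down to $0$. If $W$ solves \eqref{hjsystem}, the base case $k_p=N$ forces $p=\bar p$ and $W(\bar p,\cdot)=\tilde C(\bar p,\cdot)=V(\bar p,\cdot)$ by the boundary condition. For the inductive step, assuming $W(p',\cdot)=V(p',\cdot)$ on $[0,T]$ for all $p'$ with $k_{p'}=k_p+1$, the interior equation expresses $W(p,t)$, for each $t\in[0,T[$, as a fully determined infimum of already known quantities, whence $W(p,t)=V(p,t)$ there, while the terminal condition gives $W(p,T)=\tilde C(p,T)=V(p,T)$. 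Since the grading is finite this closes the induction and gives $W=V$. Because the system is triangular with respect to the grading rather than genuinely coupled at a fixed level, this requires no regularity or boundedness assumption on the competitor $W$.

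I expect the only delicate point to be the admissibility bookkeeping in the splitting/concatenation step of the dynamic programming argument — in particular checking the constraints ``$1\le r<N-\sum_i p^i\Rightarrow t_r=T$'' and ``$\bar p$ is reached only with the full number of switches'' in both directions, including the degenerate tails with $r-1=0$ (when $p'=\bar p$, or when $p'\neq\bar p$ and $t'=T$), for which $J$ reduces to $\tilde C$ and matches exactly the corresponding boundary value of $V$. The continuity and monotonicity assumptions $(i)$ and $(ii)$ on $C$, together with Proposition \ref{lipcontV}, are not needed for the identity itself but guarantee that the infimum over $t'$ is attained away from $t$, which is what makes the equation usable in the sequel.
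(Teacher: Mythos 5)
Your proof is correct, and the bookkeeping you single out (transfer of the admissibility constraints under splitting/concatenation, and the degenerate tails with $r-1=0$ matching the boundary values through \eqref{costoJnuovo}) is exactly what needs checking; it does go through. Where you diverge from the paper is in the existence half. The paper does not run the splitting/concatenation argument at a generic node: it proves the interior identity by backward induction on the grading $k_p=\sum_i p^i$, first for nodes with $k_p=N-1$, where the controls $(1,(t,t'),(p,\bar p))$ are the only admissible ones and the identity is read off directly, then for $k_p=N-2$ by splitting the infimum according to $r\in\{1,2\}$, writing out the nested infima over $(p_{r-1},t_{r-1},t_r)$ and recognizing the inner infimum as the level-$(N-1)$ value by the previous case, and then iterating backwards through the levels. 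Your version instead peels one switch at an arbitrary node and identifies the infimum over tails with $V(p',t')$ straight from the definition \eqref{valuef}, so the existence half needs no induction and treats all values of $r$ at once; the grading enters only in the uniqueness half, where you and the paper run the same backward induction (the paper states it tersely: any solution of \eqref{hjsystem} must agree with $V$ at level $N-1$ by \eqref{foruniqueness}, then proceed backwards). The trade-off: the paper's level-by-level computation produces explicit formulas, notably \eqref{foruniqueness}, which it then reuses verbatim for uniqueness, while your route is the more standard and more economical dynamic-programming argument, works uniformly on any graded directed acyclic network, and makes transparent, as you note, that uniqueness requires no regularity or boundedness of the competitor, only the triangular structure. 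Your closing observation that assumptions $(i)$--$(ii)$ are not needed for the identity itself but only for attainment of the infimum away from $t$ is likewise consistent with the paper (compare Remark \ref{esistenzah}).
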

\begin{proof}
First of all, let us note that the second and third equalities come from the definition of $J$ \eqref{costoJnuovo}. We have to prove the first equality. Suppose that $p_r=\bar p$, that is $p_r=(1, 1, \ldots, 1)$.

\smallskip
\noindent \textbf{Case $1.$} Let $p\in\cI$ be such that $\sum_ip^i=N-1$, for instance $p=(1, 1, \ldots, 1, 0)$, so $r=1$, $\pi=(p, \bar p)$ and $\sigma=(t, t')$ for some arbitrary $t'\in]t, T]$. Thus we have to prove that
\begin{equation}
\label{foruniqueness}
V(p, t)=\inf_{t'\in]t, T]}\{V(\bar p, t')+C(p, \bar p, t, t')\}=\inf_{t'\in]t, T]}[C(p, \bar p, t, t')+\tilde C(\bar p, t')]
\end{equation}
since $V(\bar p, \cdot)=\tilde C(\bar p, \cdot)$. The last term in the above equality is
$$
\inf_{(r, \sigma, \pi)}J(p, t, (r, \sigma, \pi))=V(p, t)
$$
being the controls $(1, (t, t'), (p, \bar p))$ the only admissible ones for $(p, t)$.
\smallskip
\\
\noindent \textbf{Case $2.$} Let $p\in\cI$ be such that $\sum_ip^i=N-2$, that is, for instance, $p=(0, 0, 1,\ldots, 1)$. In this case, the admissible controls must have either $r=2$ or $r=1$, and so $V$ is the minimum of the infimum of the cost over the controls with $r=2$ and the infimum of the cost over the controls with $r=1$. In the first case, setting $t_{r-2}=t$ and $p_{r-2}=p$, we have
\begin{equation*}
\resizebox{1\hsize}{!}{$%
\begin{split}
V(p_{r-2}, t_{r-2})&=\inf_{\substack{(r, \sigma, \pi)\text{ s.t.}\\r=2\\ \sigma=(t_{r-2}, t_{r-1}, t_r)\\ \pi=(p_{r-2},p_{r-1}, \bar p)}}J(p, t, (r, \sigma, \pi))\\
&=\inf_{\substack{t_{r-1}\in]t_{r-2}, T[\\t_r\in]t_{r-1}, T]\\p_{r-1}\in\cI_{p_{r-2}}}}\left[C(p_{r-2}, p_{r-1}, t_{r-2}, t_{r-1})+C(p_{r-1}, \bar p, t_{r-1}, t_r)+\tilde C(\bar p, t_r)\right]\\
&=\inf_{\substack{t_{r-1}\in]t_{r-2}, T[\\p_{r-1}\in\cI_{p_{r-2}}}}\Bigg[C(p_{r-2}, p_{r-1}, t_{r-2}, t_{r-1})\\
&\quad +\inf_{t_r\in]t_{r-1}, T]}\left[C(p_{r-1}, \bar p, t_{r-1}, t_r)+\tilde C(\bar p, t_r)\right]\Bigg]\\
&=\inf_{\substack{t_{r-1}\in]t_{r-2}, T[\\p_{r-1}\in\cI_{p_{r-2}}}}\left[V(p_{r-1}, t_{r-1})+C(p_{r-2}, p_{r-1}, t_{r-2}, t_{r-1})\right],
\end{split}$%
}
\end{equation*}
where the last equality comes from Case $1)$. The desired result follows.

In the second case, $r=1$, we must necessarily have $p_r\neq\bar p$ and $t_r=T$. Thus we have only to prove that
$$
V(p, t)=\inf_{p_r\in\cI_p}\{V(p_r, T)+C(p, p_r, t, T)\}=\inf_{p_r\in\cI_p}\{C(p, p_r, t, T)+\tilde C(p_r, T)\}
$$
since $V(\cdot, T)=\tilde C(\cdot, T)$. The last term in the above equality is
$$
\inf_{(r, \sigma, \pi)}J(p, t, (r, \sigma, \pi))=V(p, t)
$$
being, in this case, the controls $(1, (t, T), (p, p_r))$ the only ones we are taking account of.

Up to now, we proved the equality for every $(p, t)$ such that $\sum_ip^i=N-1$ and $\sum_ip^i=N-2$. Proceeding backwardly in this way we then can prove all the other cases with $\sum_{i}p^i=N-s$ for $s=3,\ldots,N$.

Still arguing backwardly, the uniqueness comes from the fact that any other function satisfying \eqref{hjsystem}, by \eqref{foruniqueness} must coincide with $V$ on the nodes $(p, t)$ with $t\in[0, T[$ and $p$ such that $\sum_ip^i=N-1$.
\end{proof}
\begin{remark}
\label{esistenzah}
The infimum in the first line of \eqref{hjsystem} is indeed a minimum realized for some $t'$ belonging to $[t+h, T]$, where $h$ is as in the proof of Proposition \ref{lipcontV} for some $k$ such that $t\in[0,T-k]$. Indeed, the quantity inside the minimization is continuous in $[t+h,T[$ and tends to $+\infty$ for $t'\to T^-$.

We also note that, considering $t=0$, any optimal control $(r,\sigma,\pi)$ for $V(p,0)$ with $p\neq\bar p$, is such that $t_{i+1}-t_i\ge\bar h$ for a suitable $\bar h>0$ independent on $p$ and on $\rho$. This can be seen as in the proof of Proposition \ref{lipcontV}, observing that $V(p,0)\le C(p,p',0,T,\rho)+\tilde C(p',T)$. The presence of this sort of minimal waiting time $\bar h$ between two consecutive switches, when starting at $t=0$, will lead to a piecewise continuous/constant feature of the evolution of the masses $\rho$ with a uniform bounded number of pieces in $[0, T]$. Also note that, the existence of such $\bar h$ gives the conclusion that for all possible optimal control $(r,\sigma,\pi)$ for an agent at $t=0$, all the decisional instants will belong in $[0,T-\bar h]$. Hence, in the sequel, we will actually work (see for example (9)) with pairs $(p,t)$ such that, whenever $p\neq\bar p$ and $t<T$ (otherwise the agent would have finished its evolution), it is necessarily $t\in[0,T-\bar h]$, where the value function is bounded and Lipschitz continuous, by Proposition \ref{lipcontV}.
\end{remark}
Also justified by Remark \ref{esistenzah}, we define, for $p\neq\bar p$ and $t<T$,
\begin{equation}
\label{funzioniP}
P(p, t)=\argmin_{\substack{p'\in\cI_p\\t'\in]t, T]}}\{V(p', t')+C(p, p', t, t')\}.
\end{equation}
In other words, $P(p, t)$ is the couple $(p', t')$ of the node $p'$ where it is optimal to switch at the switching instant $t'>t$. As above, we do not display the dependence on $\rho$.
\begin{remark}[still on assumption $(v)$ in \S2]
\label{esempiotempi}
Assumption $(v)$ may hold for example in the case where the cost $C$, besides $(ii)$, is derivable w.r.t. the switching time-variable $\tau$ with derivative $C_\tau$ strictly increasing w.r.t. the quantity $\tau-t$. A possible cost satisfying the previous hypotheses may be for example of the form
\begin{equation}
\label{eq:esempio_C}
C(p, p', t, \tau, \rho)=\frac{\bar C(p, p', \rho)}{\tau-t}.
\end{equation}
Moreover we assume that $V$ is convex in $[0,T[$ (in Appendix \ref{appendixB} we give an explicit example where $V$ is convex). It follows that it is two times derivable in time almost everywhere (see for example \cite{EG}, Theorem 1, p. 242). For the following counterexample, we are going to assume that the first derivative exists everywhere.
By contradiction, let us suppose that if, for the switching from $p$ to $p'$, the decision times $t_1$, $t_2$ with $t_1<t_2$ optimally generate the switching times $\tau_1, \tau_2<T$ respectively, then $\tau_2<\tau_1$. Hence it follows that $\tau_1>\tau_2\geq t_2>t_1$. This means that
$$
\inf_{\tau\geq t_1}\left\{V(p', \tau, \rho)+C(p,p',t_1,\tau,\rho)\right\}=V(p', \tau_1, \rho)+C(p,p',t_1,\tau_1,\rho),
$$
$$
\inf_{\tau\geq t_2}\left\{V(p', \tau, \rho)+C(p,p',t_2,\tau, \rho)\right\}=V(p', \tau_2, \rho)+C(p,p',t_2,\tau_2, \rho).
$$
First order conditions give
$$
V'(p', \tau_1, \rho)+C_\tau(p,p',t_1,\tau_1, \rho)=0,
$$
$$
V'(p', \tau_2, \rho)+C_\tau(p,p',t_2,\tau_2, \rho)=0.
$$
Therefore
$$
V'(p', \tau_1, \rho)=-C_\tau(p,p',t_1,\tau_1, \rho)<-C_\tau(p,p',t_2,\tau_2, \rho)=V'(p', \tau_2, \rho),
$$
which contradicts the convexity of $V$ in time.

Recall that assumption $(v)$ implies $(v')$: any optimal switching time less than $T$ originates from a unique decision time. With the same hypotheses on $C$ as above, $(v')$ can be also inferred, without assuming $(v)$, just assuming that $V$ is derivable w.r.t. the time-variable without any convexity property. Indeed, suppose that at the decision times $t_1, t_2$, $t_1<t_2$, the agents are optimally switching from $p$ to $p'$ with the same switching time $\tau<T$. Arguing as above, with $\tau_1=\tau_2=\tau$, we obtain
$$
V'(p', \tau, \rho)=-C_\tau(p,p',t_1,\tau, \rho)=-C_\tau(p,p',t_2,\tau, \rho),
$$
contradicting $t_1\neq t_2$.

Without the convexity assumption on $V$, we can infer property $(v)$ by $(v')$ if, besides the derivability of $V$, we assume that the map $\varphi:t\longmapsto\tau$ is continuous. Note that, by definition of the optimal switching instant, $\varphi(t)\to T$ as $t\to T$. By contradiction, suppose that if, for the switching from $p$ to $p'$, the decision times $t_1, t_2$ with $t_1<t_2$ optimally generate the switching times $\tau_1, \tau_2<T$ respectively, then $\varphi(t_2)=\tau_2<\tau_1=\varphi(t_1)<T$. Hence, the function $\varphi$ is somehow decreasing in $[t_1,t_2]$ but, by continuity and the limit property above, we must have the existence of $t'\neq t''$ such that $\tau=\varphi(t')=\varphi(t'')$, contradicting $(v')$.

Finally, for what concerns the convexity of $V$, note that if $C$ is strictly convex in $t$ and $\tilde C$ is decreasing in time, due to the decreasingness of $C$ with respect to $\tau$ ($(ii)$) (and the example in \eqref{eq:esempio_C} satisfies both hypotheses), then for all $p$ with one $0$ only (i.e., directly linked to the destination $\bar p$), $V(p,\cdot, \rho)$ is strictly convex, the functions $\phi$ are constantly equal to $T$. Proceeding backwardly, we can then prove that for all the other nodes the value functions are all strictly convex and, in particular, the function $\phi$ is single-valued and increasing (see the example in Appendix \ref{appendixB}).
\end{remark}
\begin{remark}
Let us note that equation \eqref{hjsystem} is in some sense the Dynamic Programming Principle for the value function $V$. However, we can not differentiate it in the time-variable $t$ and obtain an Hamilton-Jacobi-Bellman equation because our model does not take account of a continuous dynamic evolution of the agents.
\end{remark}

\section{On the continuity equations for the flow}
\label{flowsofag}
For what concerns the $\rho$ functions for the masses, using the same possible enumeration of nodes as in \S2, for every $j=0,\ldots,2^N-1$ we will have, at least formally, a system of $2^N$ continuity equations in the variables $\rho_j^{\text{dm}}$, the mass of decision-making agents (see assumption $(iv)$ in \S2), to be interpreted in a suitable formulation that we will see later:
\begin{equation}
\label{sistema2}
\begin{cases}
(\rho_j^{\text{dm}})'(t)=\sum_{p_k|p_j\in\cI_{p_k}}\lambda_{k, j}(s(t),t)\rho_k^{\text{dm}}(s(t))\d_t\\ \hspace{5.3cm}-\sum_{p_h|p_h\in\cI_{p_j}}\l_{j,h}(t, \varphi(t))\rho_j^{\text{dm}}(t)\d_t,&t\in]0, T]\\
\rho_j^{\text{dm}}(0)=\rho_j^0
\end{cases},
\end{equation}
where $\rho_j^0$ is fixed for every $j$, $\phi$ is the (possibly multivalued) function introduced in Remark \ref{unicoarrivo} and $t\longmapsto s(t)\in[0, T]$ takes into account the decision instant $s$ at which an agent switches from $p_i$ to $p_j$ at the switching time $t$. By assumption $(v)$, $s(t)$ is continuous and non-decreasing (being the inverse of the function $\phi$ in Remark \ref{unicoarrivo}) and satisfies $s(t)\leq t$ for every $t$ and $s(0)=0$. Formally such a function $s$ (as well as $\varphi$) should be indexed by $i, j$ but for simplicity we omit that. The first term in the right-hand side of \eqref{sistema2} represents the mass of decision-making agents arriving to $p_j$ at the switching instant $t$ and the second one, the mass of decision-making agents leaving $p_j$ at the decisional instant $t$. The unknowns are the $2^N$ functions $\rho_j^{\text{dm}}$ and the functions $\lambda_{k, j}:{\cal A}\subset[0, T]\times]0, T]\longrightarrow[0, 1]$, $(s,t)\longmapsto\l_{k, j}(s,t)$ (${\cal A}=\left\{(s,t):s<t\right\}$), which indicate how many decision-making agents, in $p_k$ at time $s$, have chosen $P(p_k, s)=(p_j, t)$, \eqref{funzioniP}, that is the percentage of mass of decision-making agents which is in $p_k$ and at time $s$ optimally decides to switch to $p_j$ at $t>s$. Of course, if $\l_{k, j}(s,t)>0$, then, at time $s$, deciding to switch from $p_k$ to $p_j$ at time $t$ is optimal, and we also have $\sum_{p_j|p_j\in\cI_{p_k}}\l_{k, j}(s,\xi)=1$, where $\xi\in\varphi(s)$ is any possible selection for the switch from $p_k$ to $p_j$. Similarly for $\l_{j, h}$.

Note that the previous sum equal to $1$ means that every instant $s$ is a decisional instant for all the decision-making agents present on the node. The fact that those $\lambda$ activate a real switch obviously depends on the real presence of decision-making agents on the node at the time $s$. Indeed, roughly speaking, the interpretation of \eqref{sistema2} is the following one. The functions $\l_{i, j}$, for every $i, j$, give the right way to interpret it. Such functions are basically values between $0$ and $1$ along the curve $t\longmapsto (s(t), t)$, that is $\l_ {i ,j}$ is concentrated on the curve and it is elsewhere null. From a distributional point-of-view, $\l_{i, j}$ is a concentration of Dirac deltas on that curve. In other words, if at the switching instant $t$ the switches from $p_k$ to $p_j$ and from $p_j$ to $p_h$ are both optimal, then $\l_{k, j}$ and $\l_{j, h}$ are possibly nonzero at $(s(t), t)$ and consequently activate the Dirac deltas, which give the corresponding accumulation of mass (of decision-making agents only) on the arrival node at time $t$. In particular, we stress further that the functions $\lambda_{.,.}$ are directly linked to the optimization problem, and hence to the optimal switching function $\varphi$: if to decide at the time $s$ to switch from $p_i$ to $p_j$ at the time $t>s$ is not an optimal choice, then $\lambda_{i,j}(s,t)=0$. See also the fourth line of the system (\ref{mfgsys}) and point $(v)$ in the next subsection. We also point out that such functions $\lambda_{.,.}$, as they appear in (\ref{sistema2}),  look like as unknown, but, as  in the standard fixed-point procedure in mean field games, they will be assumed as known, when one addresses the continuity equation. Indeed, such a standard fixed point procedure, in our case will be (see the next section): take $\rho$, derived the corresponding $\lambda_{.,.}$ by the optimization problem, put such $\lambda_{.,.}$ in (\ref{sistema2}) and calculate the corresponding solution $\tilde\rho$. Finally, ask $\tilde\rho=\rho$ and that will be an equilibrium. The role of the function $\lambda_{.,.}$ in (\ref{sistema2}) is the same role of the optimal field $-\nabla u$ in the Fokker-Planck equation in the standard mean field game, where $u$ is the value function. Of course in our problem, due to the possible lacking of the uniqueness of the optimal control (in the present notation, optimal $p_j$ and optimal $t$), we may have different choices for the functions $\lambda_{.,.}$, but the existence of a suitable choice for having an equilibrium will be guaranteed by a multi-valued fixed point procedure after a suitable convexification.

In the case when the function $t\longmapsto\tau=\varphi(t)$ (Remark \ref{unicoarrivo}) is always a singleton, i.e. not multivalued, then system \eqref{sistema2} may be also interpreted as system of impulsive delayed equations (see for instance \cite{impulsive}). The solutions $\rho_j^{\text{dm}}$ are somehow collections of possibly nonzero values on switching (incoming as well as outgoing) instants, and equal to zero elsewhere. The real mass evolution $\rho_j$, taking into account both decision-making and non-decision making agents, is just the right-continuous constant interpolation of those values. In other words, the $2^N$ solutions $\rho_j$ are constructed node-by-node for every switching time according to the $\l$ functions, and this process gives piecewise constant functions on $[0, T]$ (see also Remark \ref{esistenzah}).

In the next section we are going to make a suitable approximation of the problem, in order to be able to work with piecewise constant functions. Moreover, in that case, we will see a possible direct construction of such functions $\l$ also explaining their presence and roles in \eqref{sistema2}, and then the construction of the functions $\rho$. Actually, we will not use the formal equations \eqref{sistema2} but directly construct step-by-step (switch-by-switch) the solutions. In Figure \ref{figure3}, \S\ref{esistenzamfgapp}, we graphically represent the construction of a possible $\rho^{\text{dm}}$ and its constant interpolation $\rho$. The fact that in the following analysis we will not use directly the equation (\ref{sistema2}) is coherent with the theory of the standard first order mean-field game, where, for the continuity equation one has a natural candidate for the solution: the push-forward of the initial measure via the optimal field $-\nabla u$. Indeed, we are going to directly construct a sort of optimal push-forward of the initial distribution of the agents: an optimal flow inside the network.

\section{The approximated mean-field problem}
\label{apprmfg}
As argued at the end of the previous section, we are going to make a suitable approximation in order to allow us to look for solutions $\rho$ of \eqref{sistema2} in $\cP\cC([0, T], [0, 1])^{2^N}$, where $\cP\cC([0, T], [0, 1])$ is the set of piecewise constant functions from $[0, T]$ to $[0, 1]$. In order to possibly simplify the notation, using the same enumeration of the nodes in \S\ref{optvisprob}, we consider all the functions $\rho_j$ as forming a unique function in a juxtaposed sequence of $2^N$ intervals of length $T$. We then define $\cB:=\cP\cC([0, 2^NT], [0, 1])$ whose elements $\rho$ are still thought as $(\rho_0,\ldots, \rho_{2^{N}-1})$. The mean-field game system we are going to study is formally described by
\begin{equation}
\label{mfgsys}
\begin{cases}
V(p, t, \rho)=\inf_{\substack{p'\in\cI_{p}\\t'\in]t, T]}}\{V(p', t', \rho)+C(p, p', t, t', \rho)\},\hspace{0.58cm}(p, t, \rho)\in\cI\times[0, T[\times\cB\\
V(\bar p, t, \rho)=\tilde C(\bar p, t),\hspace{6.28cm} (t, \rho)\in[0, T]\times\cB\\
V(p, T, \rho)=\tilde C(p, T),\hspace{6.55cm} (p, \rho)\in\cI\times\cB\\
\l_{i,j}(s,t)=0\ \mbox{if }(p_j,t)\not\in P(p_i,s),\\
(\rho_j^{\text{dm}})'(t)=\sum_{p_k|p_j\in\cI_{p_k}}\lambda_{k, j}(s(t), t)\rho_k^{\text{dm}}(s(t))\d_t\\
\hspace{4.98cm}-\sum_{p_h|p_h\in\cI_{p_j}}\l_{j,h}(t, \varphi(t))\rho_j^{\text{dm}}(t)\d_t,\hspace{0.3cm}t\in[0, T]\\
\rho_j^{\text{dm}}(0)=\rho_j^0,\\
\rho_j\ \text{constant interpolation of } \rho_j^{\text{dm}}.
\end{cases}
\end{equation}
Note that the fourth line of \eqref{mfgsys} stands for the fact that if a switch is not optimal, then the corresponding fraction $\lambda$ is zero: no one is following that switch. In particular, we point out that the four lines of (\ref{mfgsys}) take account of the optimization process for the single agent, whereas the last three are the continuity (flow) equation for the mass of agents through the network. We also recall here what was said at the end of the last section for equation (\ref{sistema2}) that also holds for the last three lines of (\ref{mfgsys}): actually, we are not directly use the continuity equation, but directly construct step-by-step the flow, representing the solution.

Next section is devoted to prove the existence of a solution $(\rho_j,\lambda_{j,k})$ of an approximated version of \eqref{mfgsys} and hence of an $\varepsilon$-approximated equilibrium of the mean-field game. Such an approximation is mainly consistent in a suitable approximation of the function $P$ in (\ref{funzioniP}).

\subsection{Existence of an $\varepsilon$-approximated mean-field equilibrium}
\label{esistenzamfgapp}
As usual, we are going to identify the solution $\rho$ of \eqref{mfgsys} as a fixed point of a suitable function. At first sight, given also Remark \ref{esistenzah}, the space where to search for a fixed point would seem to be the following one:
$$
X=\{\rho\in\cB:\text{$\rho$ has at most $M$ pieces of constancy}\},
$$
where $M$ is a priori fixed, for example $M=\left(\frac{2^NT}{h}\right)^{2^N}$. Note that such a space can be made compact with respect to a suitable convergence but it is certainly not convex (every $\rho$ has different pieces from the others) and, to perform a fixed-point procedure, we need that $X$ satisfies a convexity property. Therefore, to overcome this difficulty, we fix $\varepsilon>0$ and we consider the partition $\cP_{\varepsilon}$ of $[0, 2^NT]$, given by the nodes $0<\varepsilon<2\varepsilon<\ldots\leq2^NT$ with $\ep=\frac{T}{m}$ for some $m\in\N$. We then consider the space
\begin{multline*}
C_{\varepsilon}=\left\{\rho\in L^2([0, 2^NT], [0, 1]):\right.\\
\left.\rho\text{ is piecewise constant on the open intervals of $\cP_{\ep}$ and $\|\rho\|_{\infty}\leq\|\rho_0\|_{\infty}$}\right\}.
\end{multline*}
Now, $C_{\varepsilon}$ is convex and compact with respect to the $L^2$ topology. Indeed, since the partition $\cP_{\ep}$ is fixed and all the functions $\rho$ are constant on it, from every interval of $\cP_{\ep}$ we can extract a convergent constant subsequence whose limit belongs to $L^2$.

We then look for a fixed point of a suitable multi-function $\psi_{\varepsilon}:C_{\varepsilon}\lra\cP(C_{\varepsilon})$, $\rho\longmapsto\psi_{\varepsilon}(\rho)$, that is we look for $\rho_{\ep}\in C_{\ep}$ such that $\rho_{\ep}\in\psi_{\ep}(\rho_{\ep})$. Roughly speaking, the idea is to construct $\psi_{\ep}$ as follows:
\begin{itemize}
\item[$(i)$] $\rho$ is put into \eqref{hjsystem} and the value function $V$ is derived;
\item[$(ii)$] $V$ is inserted in \eqref{funzioniP} and the variable $P$, which is not necessarily unique (that is, a priori, there may exist more than one optimal switching instant and more than one admissible subsequent node where it is optimal to switch), is derived;
\item[$(iii)$] We suitably approximate the optimal switching instants given by $P$ at point $(ii)$ with the nodes of the partition $\cP_{\ep}$;
\item[$(iv)$] With such approximated $\varepsilon$-optimal variables $P_{\ep}$ as in $(iii)$, we construct all the possible optimal switching paths with their decision and switching times;
\item[$(v)$] For each optimal switching path $\pi$ of point $(iv)$, we construct the corresponding functions $\l$ in \eqref{sistema2}, as all the agents were following $\pi$, that is
$$
\l^{\pi, \ep}_{i, j}(s,t)=\begin{cases}1,&(p_j,t)\in P_{\ep}(p_i, s)\cap\pi\\
0,&\text{otherwise}
\end{cases};
$$
where the notation $(p_j,t)\in P_{\ep}(p_i, s)\cap\pi$ means that, being at $p_i$ at time $s$, the choice of switching to $p_j$ at the time $t>s$ is $\varepsilon$-optimal, in the sense as explained in the previous point $(iv)$.
\item[$(vi)$] For any $\pi$, we insert the functions $\l^{\pi}$ into \eqref{sistema2}, obtaining the evolution of the mass $\rho^{\pi}\in C_\varepsilon$;
\item[$(vii)$] By a suitable convexification (interval by interval of the partition $\cP_{\ep}$) of the functions $\rho^{\pi}$ of $(vi)$, we construct a set of functions $\psi_{\ep}(\rho)$, which is contained in $\cP(C_{\varepsilon})$;
\item[$(viii)$] By proving that $\psi_{\ep}(\rho)$ is a non-empty and convex subset of $C_{\ep}$ and that the map $\rho\longmapsto\psi_{\ep}(\rho)$ has closed graph, we can apply the fixed-point Kakutani-Ky Fan Theorem (see for example \cite{KAK}) to find a desired $\rho_{\ep}$.
\end{itemize}
Note that, by construction, $\rho_{\ep}$, together with the coefficients $\l$ of the convex combinations of the extremal $\rho^{\pi}$ as in point $(vii)$, gives what can be considered as an approximated solution of \eqref{mfgsys} and hence an $\ep$-mean-field equilibrium.
\begin{definition}
\label{def:epsilon_equilibrium}
An $\varepsilon$-mean-field equilibrium of problem (\ref{mfgsys}) is a fixed point $\rho_\varepsilon$ of the multi-valued map $\psi_\varepsilon$: $\rho_\varepsilon\in\psi_\varepsilon(\rho_\varepsilon)$.
\end{definition}
We divide the construction of $\psi_{\ep}(\rho)$ into some steps. A general definition for the multi-function $\psi_\varepsilon$, covering all possible cases and, in particular, all possible networks, starting from the optimal visiting problem with $N$ targets, is beyond the purpose of this paper and, probably, it is not helpful and even meaningless. In fact, the complexity of the model increases drastically, both from a notational point-of-view and for the number of cases and subcases to be considered. Hence we show the construction of it for some examples. This, however, does not weaken the proofs of the results. In the following, for simplicity, we suppose $N=3$ (compare with Figure \ref{figure2}) and consider only paths starting from $p_0=(0,0,0)$ and that, at the initial time $t=0$, all the agents are in $p_0$. Moreover note that all the paths start at time $t=0$, but this is implicit, in our model. In Remark \ref{generalsituation} below a more general situation is considered. Moreover, the example in Appendix \ref{appendix} may be also somehow enlightening. Anyway, at the end of Remark \ref{generalsituation} we briefly give a possible (certainly non-operative) definition of $\psi_\varepsilon$.

In the sequel, we use the following further notation: $p_1=(1, 0, 0)$, $p_2=(0, 1, 0)$, $p_3=(0, 0, 1)$, $p_4=(1, 1, 0)$, $p_5=(0, 1, 1)$, $p_6=(1, 0, 1)$, $p_7=\bar p=(1, 1, 1)$.
{\it Step 1 (points $(i)-(iv)$).} Let $\rho=(\rho_{p_0}, \rho_{p_1}, \rho_{p_2}, \rho_{p_3}, \rho_{p_4}, \rho_{p_5}, \rho_{p_6}, \rho_{p_7})\in C_{\varepsilon}$ be fixed. Consider the finite set
$$
\tilde P_{p_0}=\{(p_1, \tau_1), (p_2, \tau_2), (p_3, \tau_3), (p_4, \tau_4), (p_5, \tau_5), (p_6, \tau_6), (p_7, \tau_7)\},
$$
whose elements are the couples composed by all the possible optimal admissible nodes $p_1,\ldots, p_7$ (starting from $p_0=(0, 0, 0)$), and the possible optimal switching instants $\tau_1,\ldots, \tau_7$, as derived in point $(ii)$, that is, for example, $\tau_2$ is the optimal switching instant in order to switch to $p_2=(0,1,0)$ with decision at $t=0$ in $p_0=(0,0,0)$ (independently whether the choice of $p_2$ is optimal or not).

For point $(iii)$, we argument as follows. At first observe that, at point $(ii)$, the multiplicity of the variables $P$ lies on the admissible subsequent node, but may also lie on the optimal switching instant (for a fixed node), if $\tau^-<\tau^+$, as in Remark \ref{unicoarrivo}. In order to make the solution $\rho$ consistent with the partition ${\mathcal P}_\varepsilon$, and to overcome the possible difficulties of the multivalued feature in time (making it at most discrete), we approximate the possible optimal switching instants $\tau_1,\ldots,\tau_7$ with the nodes of $\cP_{\varepsilon}$. In particular, for a generic switching instant $\tau_i$, we set
\begin{align*}
&\underline m(\tau_i, \varepsilon):=\max\{n\in\mathbb N:n\varepsilon\leq\tau_i\},\\
&\underline m(\tau_i,\varepsilon)\varepsilon=\text{the largest node not larger than $\tau_i$},\\
&\overline m(\tau_i, \varepsilon):=\min\{n\in\mathbb N:n\varepsilon\geq \tau_i\},\\
&\overline m(\tau_i,\varepsilon)\varepsilon=\text{the smallest node not smaller than $\tau_i$}.
\end{align*}
Then, if in the switching from $p$ to $p'$, the optimal switching instant $\tau_i$ belongs to the interval $[\underline m(\tau_i, \varepsilon)\varepsilon, \overline m(\tau_i, \varepsilon)\varepsilon]$, we select
\begin{equation}
\label{definizioneF}
\tilde\tau_{i,\ep}\in F(\tau_i)=\begin{cases}
\{\underline m(\tau_i, \ep)\ep\},&\tau_i\in[\underline m(\tau_i, \varepsilon)\ep, \underline m(\tau_i, \varepsilon)\ep+\frac{\ep}{2}[\\
\{\underline m(\tau_i, \varepsilon)\ep, \overline m(\tau_i, \varepsilon)\ep\},&\tau_i=\underline m(\tau_i, \varepsilon)\ep+\frac{\ep}{2}\\
\{\overline m(\tau_i, \varepsilon)\ep\},&\tau_i\in]\underline m(\tau_i, \varepsilon)\ep+\frac{\ep}{2}, \overline m(\tau_i, \varepsilon)\ep]
\end{cases}.
\end{equation}
In this way, the approximated variables $P_{\ep}$ in $(iii)$ replace every optimal pair $(p_i,\tau_i)\in P\subseteq\tilde P_{p_o}$ by the pairs (which we call $\varepsilon$-optimal) $(p_i,\tilde\tau_{i, \ep})$, $\tilde\tau_{i, \ep}\in F(\tau_i)$. Therefore, we construct all the possible $\varepsilon$-optimal switching paths $\pi$ with decision and switching times given by those approximated $\tilde\tau_{i, \ep}$, just taking, switch by switch, one and only one of the pairs above. For example, if $p_0\to p_1\to p_4\to p_7$ is an optimal path with $\tau_1,\tau_4,\tau_7$ the corresponding optimal switching instants, that is
$$
(p_1,\tau_1)\in P(p_0,0),\ (p_4,\tau_4)\in P(p_1,\tau_1),\
(p_7,\tau_7)\in P(p_4,\tau_4),
$$
then we consider all the possible $\varepsilon$-optimal paths $p_0\to p_1\to p_4\to p_7$ with $\varepsilon$-optimal switching instants $\tilde\tau_{j,\varepsilon}\in F(\tau_j)$, $j=1,4,7$, that is
$$
(p_1,\tilde\tau_{1,\varepsilon})\in P_\varepsilon(p_0,0),\ (p_4,\tilde\tau_{4,\varepsilon})\in P_\varepsilon(p_1,\tau_1),\
(p_7,\tilde\tau_{7,\varepsilon})\in P_\varepsilon(p_4,\tau_4),
$$
where
\begin{equation}
\label{eq:funzioniP_epsilon}
P_\varepsilon(p_i,s)=\{(p_j,F(\tau_j)):(p_j,\tau_j)\in P(p_i, s)\}.
\end{equation}
In particular, note that, if $\varphi(s)=[\tau_j^-,\tau_j^+]$ as in Remark \ref{unicoarrivo}, then $P_\varepsilon(p_i,s)$ contains all the pairs $(p_j,\tilde\tau_j)$ with $\tilde\tau_j=$ nodes of ${\mathcal P}_\varepsilon$ in $[\underline m(\tau_j^-,\varepsilon)\varepsilon,\overline m(\tau_j^+,\varepsilon)\varepsilon]$.

{\it Step 2 (points $(v)-(vii)$).} The aim is to build a multi-function $\rho\longmapsto\psi_{\ep}(\rho)\subset C_{\ep}$ with (compact and) convex images and closed graph, to which we will apply the fixed-point Kakutani-Ky Fan Theorem.

For each $\varepsilon$-optimal switching path $\pi$ of point $(iv)$, Step 1, we construct the corresponding evolution of the mass, assuming that all the agents (which here are assumed to be all at $p_0$ at time $t=0$) are following $\pi$. For example, for the possible $\varepsilon$-optimal path $p_0\to p_1\to p_4\to p_7$ as in Step 1, we would get
$$
\begin{array}{ll}
\displaystyle
\rho_0(t)=\begin{cases}
\rho^0,&0\le t<\tilde\tau_{1,\varepsilon}\\
0,&\tilde\tau_{1,\varepsilon}\le t\le T
\end{cases},\qquad
\rho_1(t)=\begin{cases}
0,&0\le t<\tilde\tau_{1,\varepsilon}\\
\rho^0,&\tilde\tau_{1,\varepsilon}\le t<\tilde\tau_{4,\varepsilon}\\
0,&\tilde\tau_{4,\varepsilon}\le t\le T
\end{cases},\\
\rho_4(t)=\begin{cases}
0,&0\le t<\tilde\tau_{4,\varepsilon}\\
\rho^0,&\tilde\tau_{4,\varepsilon}\le t<\tilde\tau_{7,\varepsilon}\\
0,&\tilde\tau_{7,\varepsilon}\le t\le T
\end{cases},\qquad
\rho_7(t)=\begin{cases}
0,&0\le t<\tilde\tau_{7,\varepsilon}\\
\rho^0,&\tilde\tau_{7,\varepsilon}\le t\le T
\end{cases},\\
\rho_i\equiv 0, \ i=2,3,5,6,
\end{array}
$$
and note that, by juxtaposition, $\rho^{\pi,\varepsilon}=(\rho_0,\rho_1,\ldots,\rho_7)\in C_\varepsilon$. Formally, as explained in \S4, such an evolution $\rho^{\pi,\varepsilon}$ can be seen as the constant interpolation of a decision-making solution $\rho^{\text{dm}}$ of \eqref{sistema2}, with coefficients $\lambda$ (to be understood associated to $\pi$, $\varepsilon$ and hence to the corresponding selection in $P_\varepsilon$) satisfying
$$
\l^{\pi,\varepsilon}_{i,j}(s, t)=\begin{cases}1,&(p_j, t)\in P_{\ep}(p_i, s)\cap\pi\\
0,&\text{otherwise}
\end{cases}.
$$
The aim is to construct $\psi_\varepsilon(\rho)$ as a suitable convexification of all those ``extremal'' evolutions $\rho^{\pi,\varepsilon}$. Such a convexification is constructed by taking into account the decision-making nodes $(p_0,0)$ and $(p_j,\tilde\tau_{j,\varepsilon})$. Still considering an example, suppose that the following paths (nodes $p_i$ and switching time $\tilde\tau_i$) are $\varepsilon$-optimal
$$
\begin{array}{ll}
\displaystyle
\pi^1: (p_0,0)\to(p_1,\tilde\tau_1^1)\to(p_4,\tilde\tau_4^1)\to(p_7,\tilde\tau_7^1),\\
\displaystyle
\pi^2: (p_0,0)\to(p_1,\tilde\tau_1^2)\to(p_6,\tilde\tau_6^2)\to(p_7,\tilde\tau_7^2),\\
\displaystyle
\pi^3: (p_0,0)\to(p_3,\tilde\tau_3^3)\to(p_6,\tilde\tau_6^3)\to(p_7,\tilde\tau_7^3),
\end{array}
$$
where we suppose
$$
0<\tilde\tau_1^1=\tilde\tau_1^2<\tilde\tau_6^2<\tilde\tau_3^3<\tilde\tau_6^3<\tilde\tau_4^1<\tilde\tau_7^1=\tilde\tau_7^2=\tilde\tau_7^3=T.
$$
We have a first decisional split in $p_0$ at $t=0$ between agents switching to $p_1$ and to $p_3$, respectively. We then have the convex coefficients $\lambda_{0,1}(0),\lambda_{0,3}(0)\in[0,1]$ with sum equal to $1$. Then another decisional split occurs in $p_1$ at $\tilde\tau_1=\tilde\tau_1^1=\tilde\tau_1^2$, giving the convex coefficients $\lambda_{1,4}(\tilde\tau_1)$, $\lambda_{1,6}(\tilde\tau_1)$, and no other decisional split occurs. We then get the evolutions
\begin{equation}
\label{convexification}
\begin{array}{ll}
\displaystyle
\rho_0(t)=\begin{cases}
\rho^0,&0\le t<\tilde\tau_1^1\\
\lambda_{0,3}(0)\rho^0,&\tilde\tau_1^1\le t<\tilde\tau_3^3\\
0,&\tilde\tau_3^3\le t\le T
\end{cases},\ \
\rho_1(t)=\begin{cases}
0,&0\le t<\tilde\tau_1^1\\
\lambda_{0,1}(0)\rho^0,&\tilde\tau_1^1\le t<\tilde\tau_6^2\\
\lambda_{1,4}(\tilde\tau_1)\lambda_{0,1}\rho^0,&\tilde\tau_2^6\le t<\tilde\tau_4^1\\
0,&\tilde\tau_4^1\le t\le T
\end{cases},\\
\rho_3(t)=\begin{cases}
0,&0\le t<\tilde\tau_3^3\\
\lambda_{0,3}(0)\rho^0,&\tilde\tau_3^3\le t<\tilde\tau_6^3\\
0,&\tilde\tau_6^3\le t\le T
\end{cases},\\
\rho_4(t)=\begin{cases}
0,&0\le t<\tilde\tau_4^1\\
\lambda_{1,4}(\tilde\tau_1)\lambda_{0,1}(0)\rho^0,&\tilde\tau_4^1\le t< T\\
0,&t=T
\end{cases},\\
\rho_6(t)=\begin{cases}
0,&0\le t<\tilde\tau_6^2\\
\lambda_{1,6}(\tilde\tau_1)\lambda_{0,1}(0)\rho^0,&\tilde\tau_6^2\le t<\tilde\tau_6^3\\
(\lambda_{1,6}(\tilde\tau_1)\lambda_{0,1}(0)+\lambda_{0,3}(0))\rho^0,&\tilde\tau_6^3\le t<T\\
0,&t=T
\end{cases},\\
\rho_7(t)=\begin{cases}
0,&0\le t<T\\
\rho^0,&t=T
\end{cases},\qquad\rho_2=\rho_5\equiv0.
\end{array}
\end{equation}
Again, by juxtaposition, we get an element of $C_\varepsilon$. The set $\psi_\varepsilon(\rho)\subseteq C_\varepsilon$ is then constructed by all the possible convexifications as above of all sets of extremal evolutions $\rho^{\pi,\varepsilon}$. See also the discussion after Definition \ref{def:epsilon_equilibrium} and the end of Remark \ref{generalsituation}. See Figure \ref{figure3} for a graphic representation of $\rho_6^{\text{dm}}$ and its constant interpolation $\rho_6$.

\begin{remark}
The functions $\lambda_{i, j}$ and their products as shown in the example above, together with the decisional and switching instants, give the coefficients $\lambda_{i, j}$ in the formal equations \eqref{sistema2}, for the decision-making part $\rho^{\text{dm}}$ of the evolution.
\end{remark}
\begin{figure}
\centering
\resizebox{!}{0.47\textwidth}{
\begin{tikzpicture}
\draw (-3.5,0)--(11,0)--(11,7)--(-3.5,7)--cycle;

\draw[-stealth] (0.7,2)--(5,2);
\draw[-stealth] (1,1.7)--(1,6);
\filldraw (1,2) circle (\rad) node [anchor=north east]{$0$};
\filldraw (4,2) circle (\rad) node [anchor=north]{$T$};
\draw[thick] (2,1.9)--(2,2.1);
\draw[thick] (3,1.9)--(3,2.1);
\draw[thick] (4,1.9)--(4,2.1);
\node at (2,1.7){$\tilde\tau_6^2$};
\node at (3,1.7){$\tilde\tau_6^3$};
\node at (5,1.7){$t$};
\node at (0.65,5.9){$\rho_6^{\text{dm}}$};

\draw[thick] (0.9,4)--(1.1,4);
\draw[thick] (0.9,5)--(1.1,5);
\node at (-0.4,4){$\lambda_{1,6}(\tilde\tau_1)\lambda_{0,1}(0)\rho^0$};
\node at (-1.3,5){$(\lambda_{1,6}(\tilde\tau_1)\lambda_{0,1}(0)+\lambda_{0,3}(0))\rho^0$};

\filldraw (2,4) circle (\rad);
\filldraw (3,5) circle (\rad);
\draw (2,2)--(2,4);
\draw (3,2)--(3,5);

\draw[-stealth] (5.7,2)--(10,2);
\draw[-stealth] (6,1.7)--(6,6);
\filldraw (6,2) circle (\rad) node [anchor=north east]{$0$};
\filldraw (9,2) circle (\rad) node [anchor=north]{$T$};
\draw[thick] (7,1.9)--(7,2.1);
\draw[thick] (8,1.9)--(8,2.1);
\draw[thick] (9,1.9)--(9,2.1);
\node at (7,1.7){$\tilde\tau_6^2$};
\node at (8,1.7){$\tilde\tau_6^3$};
\node at (10,1.7){$t$};
\node at (5.75,5.9){$\rho_6$};

\draw (7,2) circle (\rad);
\filldraw (7,4) circle (\rad);
\filldraw (8,5) circle (\rad);
\draw (8,4) circle (\rad);
\draw (9,5) circle (\rad);
\draw (7,4)--(8,4);
\draw (8,5)--(9,5);
\draw[thick] (6,2)--(7,2);

\end{tikzpicture}}
\caption{Representation of $\rho_6^{\text{dm}}$ and of its constant interpolation $\rho_6$.}\label{figure3}
\end{figure}
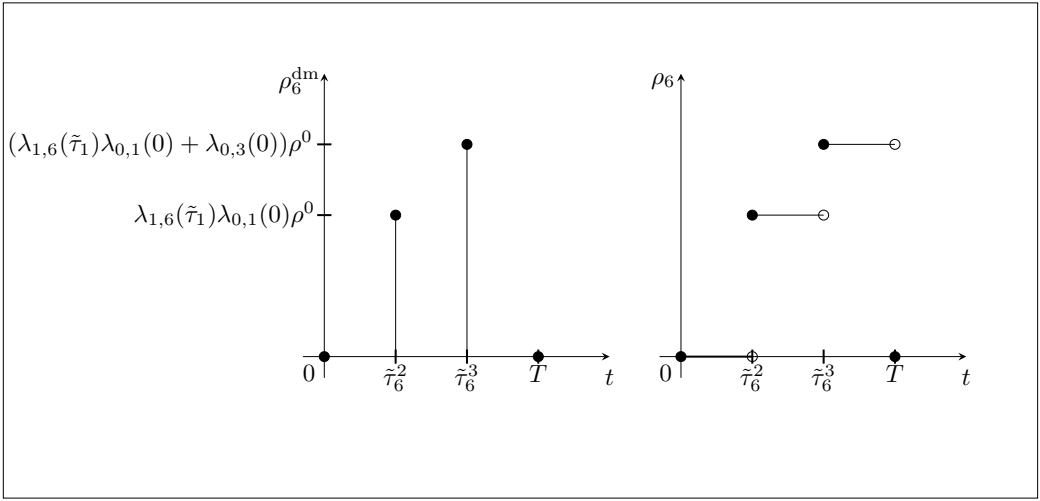

\begin{lemma}[point $(viii)$]
\label{fixedpointe}
For any $\rho\in C_{\varepsilon}$, the set $\psi_{\ep}(\rho)$ is a non-empty convex (and compact) subset of $C_{\varepsilon}$. Moreover, the map $\rho\longmapsto\psi_{\ep}(\rho)$ has closed graph.
\end{lemma}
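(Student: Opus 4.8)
The plan is to first make the structure of $\psi_\ep(\rho)$ completely explicit, and then to reduce the three assertions to elementary facts about convex hulls of finitely many points, together with a single limiting argument built on Propositions \ref{lipcontV} and \ref{dynprogsw}.

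\emph{Structure of $\psi_\ep(\rho)$.} First I would record that, since the partition $\cP_\ep$ is fixed and finite and the network is finite, there are only finitely many admissible discrete node-time paths $\pi$ (each switching instant $\tilde\tau_{j,\ep}$ being a node of $\cP_\ep$ and each $p_j$ a node of the network). For a fixed such $\pi$, the extremal evolution $\rho^{\pi,\ep}$ of Step~2 depends only on $\pi$ — it simply places the total mass $\rho^0$ on each visited node along the prescribed intervals — and not on $\rho$. Writing $\Pi_\ep(\rho)$ for the finite set of $\ep$-optimal discrete paths attached to $\rho$, the discussion surrounding \eqref{convexification} shows that the $\lambda$-convexification, whose path-fractions are the products of the decisional split coefficients along each branch of the (tree of) $\ep$-optimal paths, realizes exactly the convex combinations $\sum_{\pi\in\Pi_\ep(\rho)}\alpha_\pi\rho^{\pi,\ep}$ with $(\alpha_\pi)$ in the simplex; hence
$$\psi_\ep(\rho)=\conv\{\rho^{\pi,\ep}:\pi\in\Pi_\ep(\rho)\}.$$

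\emph{Non-emptiness, convexity, compactness.} By Remark \ref{esistenzah} the infimum defining $V(p_0,0,\rho)$ is attained, so there is at least one genuine optimal path; since each $F(\tau_j)\neq\emptyset$, the set $\Pi_\ep(\rho)$ is non-empty, and so is $\psi_\ep(\rho)$. As the functions constant on the fixed finite partition $\cP_\ep$ form a finite-dimensional space, $C_\ep$ is a compact convex subset of $\R^D$ for some $D$; the convex hull of the finitely many points $\rho^{\pi,\ep}\in C_\ep$ is therefore convex and compact, and it is contained in $C_\ep$ because $C_\ep$ is convex and $\|\sum_\pi\alpha_\pi\rho^{\pi,\ep}\|_\infty\le\max_\pi\|\rho^{\pi,\ep}\|_\infty\le\|\rho_0\|_\infty$.

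\emph{Closed graph (the main obstacle).} Let $\rho^n\to\rho$ in $L^2$ and $\tilde\rho^n\in\psi_\ep(\rho^n)$ with $\tilde\rho^n\to\tilde\rho$. Using finiteness of the set of discrete paths I would pass to a subsequence along which $\Pi_\ep(\rho^n)\equiv\Pi^*$ is constant; then $\tilde\rho^n=\sum_{\pi\in\Pi^*}\alpha_\pi^n\rho^{\pi,\ep}$, and, the simplex being compact, a further subsequence gives $\alpha^n\to\alpha$, whence $\tilde\rho=\sum_{\pi\in\Pi^*}\alpha_\pi\rho^{\pi,\ep}$ by linearity. It then remains to prove $\Pi^*\subseteq\Pi_\ep(\rho)$. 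Fixing $\pi\in\Pi^*$ with nodes $p_0,p_{i_1},\dots,p_{i_r}$, for each $n$ there is a genuine optimal path with the same nodes and switching instants $\tau_k^n$ obeying the dynamic programming relations $V(p_{i_{k-1}},\tau_{k-1}^n,\rho^n)=V(p_{i_k},\tau_k^n,\rho^n)+C(p_{i_{k-1}},p_{i_k},\tau_{k-1}^n,\tau_k^n,\rho^n)$ with $\tilde\tau_k\in F(\tau_k^n)$. Passing to a subsequence so that $\tau_k^n\to\tau_k^*$ (all in $[0,T]$, the decisional ones in $[0,T-\bar h]$ by Remark \ref{esistenzah}), I would let $n\to\infty$ in each relation: the uniform convergence $V(p,\cdot,\rho^n)\to V(p,\cdot,\rho)$ on compact subsets of $[0,T)$ and the continuity of $V$, $C$, $\tilde C$ from Proposition \ref{lipcontV}, together with the boundary conditions of \eqref{hjsystem} for the terminal switch, yield $V(p_{i_{k-1}},\tau_{k-1}^*,\rho)=V(p_{i_k},\tau_k^*,\rho)+C(p_{i_{k-1}},p_{i_k},\tau_{k-1}^*,\tau_k^*,\rho)$, i.e.\ $(p_{i_k},\tau_k^*)\in P(p_{i_{k-1}},\tau_{k-1}^*)$, so the limit path is genuine optimal for $\rho$. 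Finally, from $\tilde\tau_k\in F(\tau_k^n)$ and $\tau_k^n\to\tau_k^*$, the upper semicontinuity of the rounding map $F$ — whose graph is closed precisely because of the double value $\{\underline m(\tau_k,\ep)\ep,\overline m(\tau_k,\ep)\ep\}$ assigned at the midpoints in \eqref{definizioneF} — gives $\tilde\tau_k\in F(\tau_k^*)$, so that $(p_{i_k},\tilde\tau_k)\in P_\ep(p_{i_{k-1}},\tau_{k-1}^*)$ and $\pi\in\Pi_\ep(\rho)$. Hence $\tilde\rho\in\conv\{\rho^{\pi,\ep}:\pi\in\Pi^*\}\subseteq\psi_\ep(\rho)$.

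The two delicate points I expect are exactly the following. First, passing to the limit in the dynamic programming identity must preserve not merely optimality of the switching instant but the specific arrival node $p_{i_k}$; this is why I keep the \emph{whole} equality rather than only the optimal value, so that the limiting equality itself certifies $(p_{i_k},\tau_k^*)\in P(p_{i_{k-1}},\tau_{k-1}^*)$. Second, the closed-graph property of $F$ at partition nodes and at midpoints is what the midpoint bifurcation in \eqref{definizioneF} is designed to guarantee, and checking it at those two boundary configurations is the one genuinely case-based verification in the argument.
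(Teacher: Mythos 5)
Your proposal is correct and follows essentially the same route as the paper: both treat $\psi_\ep(\rho)$ as the set of convex combinations of the finitely many extremal evolutions $\rho^{\pi,\ep}$ (your explicit identification $\psi_\ep(\rho)=\conv\{\rho^{\pi,\ep}:\pi\in\Pi_\ep(\rho)\}$ is what the paper's ``decisional node by decisional node'' convexification amounts to), and both obtain the closed graph from Proposition \ref{lipcontV} (uniform convergence of $V$ on $[0,T-k]$ plus upper semicontinuity of optimal pairs), the closed-graph property of the rounding map $F$ guaranteed by the midpoint case of \eqref{definizioneF}, finiteness of nodes/paths, and convergence of the convex coefficients, with the key final check in both arguments being that limit coefficients can only charge switches that remain $\ep$-optimal for the limit $\rho$. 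The only cosmetic difference is that you obtain compactness of $\psi_\ep(\rho)$ directly as a finite convex hull in the finite-dimensional space of functions constant on $\cP_\ep$, whereas the paper deduces closedness from the closed graph and then compactness from that of $C_\ep$.
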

\begin{proof}
Clearly the set $\psi_{\ep}(\rho)$ is non-empty and moreover it is convex. Indeed, if $\rho^1,\rho^2\in\psi_\varepsilon(\rho)$ and $\lambda\in[0,1]$, then $\lambda\rho^1+(1-\lambda)\rho^2\in\psi_\varepsilon(\rho)$. First, note that the extremal evolutions are a finite quantity $\{\rho^{\pi_{1,\varepsilon}},\ldots,\rho^{\pi_{r,\varepsilon}}\}$, because the number of $\varepsilon$-optimal paths, $\pi^{k,\varepsilon}$, $k=1,\ldots,r$, is finite. Hence we can consider both $\rho^1$ and $\rho^2$ as a convex combination, decisional node by decisional node (as described in Step 2), of all extremal evolutions, with convex coefficients sets $\Lambda^1$ and $\Lambda^2$ (note that the decisional nodes $(p_i,\tilde\tau_i)$ are determined by the fixed $\rho\in C_\varepsilon$ via \eqref{funzioniP}, \eqref{eq:funzioniP_epsilon}). This gives that $\lambda\rho^1+(1-\lambda)\rho^2$ is a same kind of convex combination of the extremal evolutions with set of convex coefficients $\lambda\Lambda^1+(1-\lambda)\Lambda^2$ (sum performed $\varepsilon$-optimal path by $\varepsilon$-optimal path, $\pi^{k,\varepsilon}$,  and decisional node by decisional node), and hence it belongs to $\psi_\varepsilon(\rho)$, which turns out to be convex.

Now, we prove that the multifunction $\rho\longmapsto\psi_{\ep}(\rho)$ has closed graph. From this, we also get the closedness of $\psi_{\ep}(\rho)$ and, since $C_{\varepsilon}$ is compact, it follows that $\psi_{\ep}(\rho)$ is compact too.

Consider a sequence $\{\rho^n\}_n\subset C_{\varepsilon}$ with $\rho^n\lra \rho$ in $C_{\varepsilon}$, that is $\rho\in C_\varepsilon$ and the convergence is in $L^2$. We want to show that for every $\rho'^n\in\psi_{\ep}(\rho^n)$ with $\rho'^n\lra\rho'$ in $C_{\varepsilon}$, we have $\rho'\in\psi_{\ep}(\rho)$.

Let us prove that, up to a subsequence, $\rho'^n\lra\tilde\rho'$ in $L^2$ with $\tilde\rho'\in C_{\ep}$ and $\tilde\rho'\in\psi_{\ep}(\rho)$. By the uniqueness of the limit in $L^2$, it must hold $\rho'=\tilde\rho'$, ending the proof. By Proposition \ref{lipcontV}, we have $V^n\lra V$ uniformly on $[0, T-k]$ for all $k>0$ (i.e., $V(p, \cdot, \rho^n)\lra V(p, \cdot, \rho)$ uniformly on $[0, T-k]$) and if $t'^n$ is optimal for $V(p, t^n, \rho^n)$ and $t^n\to t$, $t'^n\to t'$, then $t'$ is optimal for $V(p, t, \rho)$. In particular, note that, by Remark \ref{esistenzah}, since $t^n$ is a decisional instant for an optimal path starting at $t=0$, then $t^n\in[0,T-\bar h]$.  Therefore, denoting by $P^n, P_\varepsilon^n,P,P_\varepsilon$ the functions \eqref{funzioniP} and \eqref{eq:funzioniP_epsilon} corresponding to $\rho^n$ and $\rho$, respectively, we have
\begin{equation}
\label{convPep1}
(p'^n,t'^n)\in P^n(p,t^n)\ \text{and } (p'^n, t'^n)\to(p', t')\Rightarrow\ (p',t')\in P(p,t),
\end{equation}
and hence, by definition of $P_{\ep}$, \eqref{eq:funzioniP_epsilon} (see also the comment below it), in particular by the definition of $F$ in \eqref{definizioneF}, for every choice of $(p'^n, \tilde t'^n)\in P_{\ep}^n(p, t^n)$ there exists $(p', \tilde t')\in P_{\ep}(p, t)$ such that
\begin{equation}
\label{convPep2}
(p'^n, \tilde t'^n)\to (p', \tilde t')\quad\text{up to a subsequence (with $p'^n, t'^n, p', t'$ as in \eqref{convPep1})}.
\end{equation}
Moreover, since the nodes are finite, there exists $\bar n\in\mathbb N$ such that for every $p$,
\begin{equation}
\label{convergenzap}
p^n\to p\ \Rightarrow \ p^n=p\quad\text{for every }n\geq\bar n.
\end{equation}
Let $(\rho^{\pi_{1, \ep}},\ldots, \rho^{\pi_{r, \ep}})$ be the extremal points of $\psi_{\ep}(\rho)$, where $\pi_1,\ldots, \pi_r$ are the $\ep$-optimal paths. By \eqref{convergenzap}, we can assume that for $n$ sufficiently large, also in $\psi_{\ep}(\rho^n)$ the extremal points are exactly in the quantity $r$ and their sequences of nodes are the same as the ones of $\pi_1,\ldots, \pi_r$ and only the decisional and switching instants may change with $n$. Let us denote by $\rho^{\pi_1, n, \ep},\ldots, \rho^{\pi_r, n, \ep}$ those extremal points. Then, for $n$ sufficiently large, $\rho'^n\in\psi_{\ep}(\rho^n)$ is a convex combination, constructed as in Step 2, of the extremal points $\rho^{\pi_1, n, \ep},\ldots, \rho^{\pi_r, n, \ep}$. Let $\l_{i, j}^n(\tilde t^n)\in[0, 1]$ be the corresponding coefficients for the generic decisional instant $\tilde t^n$. Up to a subsequence, we can assume that $\tilde t^n\to\tilde t$ and $\l_{i, j}^n(\tilde t^n)\to\l_{i, j}=:\l_{i, j}(\tilde t)\in[0, 1]$ and also $\tilde t'^n\to\tilde t'$ with $(p', \tilde t'^n)\in P_{\ep}^n(p, \tilde t^n)$ and, by \eqref{convPep2}, $(p', \tilde t')\in P_{\ep}(p, \tilde t)$. Since $\tilde t'^n, \tilde t^n$ assume only discrete values on partition $\cP_{\ep}$, we can also assume $\tilde t'^n=\tilde t'$ and $\tilde t^n=\tilde t$ for $n$ sufficiently large. Hence the extremal points $\rho^{\pi_1, n, \ep},\ldots, \rho^{\pi_r, n, \ep}$ are exactly the same as the ones of the limit case $\psi_{\ep}(\rho)$: the same $\ep$-optimal paths $\pi_1,\ldots, \pi_r$ with the same decisional and switching instants. The only convergence is in the convex coefficients.

Now, we construct $\tilde\rho'$ as the convex combination of the extremal points with limit coefficients $\l_{i, j}$. Obviously $\tilde\rho'\in C_{\ep}$ and $\tilde\rho'^n\lra\tilde\rho'$ in $L^2$. To conclude, we have to prove that $\tilde\rho'\in \psi_{\ep}(\rho)$. In particular, we have to show that if $(p_j, \tilde t')\notin P_{\ep}(p_i, \tilde t)$, then the corresponding $\l_{i, j}(\tilde t)=0$. This is true because, if $\l_{i, j}(\tilde t)$ was greater than $0$, then $\l^n_{i, j}(\tilde t^n)>0$ by convergence and hence $(p_j, \tilde t'^n)\in P_{\ep}^n(p_i, \tilde t^n)$, and this is in contradiction with \eqref{convPep2}. Therefore $\tilde\rho'\in\psi_{\ep}(\rho)$ and we conclude because, by construction, $\rho'^n\lra\tilde\rho'$ in $L^2$ since the convergence of the coefficients $\l_{i, j}^n$ gives the convergence of the constant values of $\rho'^n$ on the partition $\cP_{\ep}$ to the constant values of $\tilde\rho'$.
%
\end{proof}
\begin{remark}
\label{generalsituation}
Observe that the general case $N>3$ works with the same ideas and tools, being careful that we will have a more complex network (i.e, many more nodes and paths, that is a more complex topology of the network), which makes the fixed-point procedure above certainly harder from a computational point-of-view but even just from a notational one, already for what concerns the analytical description of $\psi_{\ep}$ (see for example the description of $\rho_6$ in the simple case in \eqref{convexification}). Moreover, here above, for simplicity, we considered only paths starting from $p_0=(0, 0, 0)$ and that, at the initial time $t=0$, all the agents are in $p_0$, that is $\rho_i(0)=0$ for all $i\neq0$. The case where at the initial time the mass is possible distributed to different nodes, up to suitably construct the evolutions as in Step 2, which will be more knotty, does not change the proof too much (we may have more involved intersections and overlaps of switches, still in a finite number, as $\rho_6$ in \eqref{convexification} but probably in a more complicated way).

Still considering the network in Figure \ref{figure2} as in \eqref{convexification}, with the same enumeration of nodes $p_0,p_1,\ldots,p_7=\bar p$, in order to give an idea of the descriptive and notational complexity of the construction of $\psi_{\ep}$, already in the case of that simple network, but with a generic initial distribution $\rho^0=(\rho^0_0,\rho^0_1,\ldots\rho^0_7)$, if we consider, for instance, the flow $\rho_6$ through the node $p_6$, we have
$$
\rho_6=\rho^{6, 6}+\rho^{1, 6}+\rho^{3, 6}+\rho^{0,1,6}+\rho^{0,3,6}.
$$
The term $\rho^{6, 6}$ corresponds to the flow of the agents that at time $t=0$ are already in $p_6$: all of them, at the decisional instant $t=0$, choose a switching instant $\tau_{6,7}$ optimally generated as in \eqref{definizioneF} in order to switch from $p_6$ to $p_7$.

The term $\rho^{1, 6}$ corresponds to the flow, through $p_6$, of the agents that at $t=0$ were in $p_1$: all of them, at the decisional instant $t=0$, choose a switching instant $\tau_{1,6}$ optimally generated as in \eqref{definizioneF} in order to switch from $p_1$ to $p_6$, together with the corresponding fraction $\l_{1,6}$ of agents performing such a switch. Hence, at the instant $\tau_{1,6}$, the mass of agents $\l_{1,6}\rho^0_1$ switches from $p_1$ to $p_6$. Such a mass of agents, at the (decisional) instant $\tau_{1,6}$, optimally chooses a switching instant $\tau_{1,6,7}$ in order to switch from $p_6$ to $p_7$.

The term $\rho^{3,6}$ is constructed similarly to $\rho^{1,6}$ by replacing $p_1$ with $p_3$.

The term $\rho^{0,1,6}$ corresponds to the flow, through $p_6$, of the agents that at $t=0$ were in $p_0$: all of them, at the decisional instant $t=0$, choose a switching instant $\tau_{0,1}$ optimally generated as in \eqref{definizioneF} in order to switch from $p_0$ to $p_1$, together with the corresponding fraction $\l_{0,1}$ of agents performing such a switch. Hence, at the instant $\tau_{0,1}$, the mass of agents $\l_{0,1}\rho^0_0$ switches from $p_0$ to $p_1$. Such a mass of agents, at the (decisional) instant $\tau_{0,1}$, optimally chooses a switching instant $\tau_{0,1,6}$ in order to switch from $p_1$ to $p_6$, together with the fraction $\l_{0,1,6}$ of agents performing such a switch. Therefore, at the instant $\tau_{0,1,6}$, the mass of agents $\l_{0,1,6}\l_{0,1}\rho^0_0$, switches from $p_1$ to $p_6$. Such a mass of agents, at the (decisional) instant $\tau_{0,1,6}$, optimally chooses a switching instant $\tau_{0,1,6,7}$ in order to switch from $p_6$ to $p_7$.

The term $\rho^{0,3,6}$ is constructed similarly to $\rho^{0,1,6}$ by replacing $p_1$ with $p_3$.

Obviously, the coefficients $\l$ above must be constrained to have sum equal to $1$ with the other corresponding coefficients. For instance, $\l_{0,1,6}+\l_{0,1,4}=1$. Finally note that in the simple case \eqref{convexification}, $\rho_6$ corresponds to $\rho^{0,1,6}+\rho^{0,3,6}$ only, and, in particular, $\l_{3,6}=1$, which means that $\l_{3,5}=0$, for the optimality hypotheses assumed in that example.

As said before, we end this remark with a possible general definition of $\psi_\varepsilon$.  Given $\rho\in C_\varepsilon$, another element $\tilde\rho\in C_\varepsilon$ belongs to $\psi_\varepsilon(\rho)$ if and only if the following holds. We give the list of indices and notation we are going to use.
\begin{itemize}
\item[1)] $j$: Index of the node of the network, $0\le j\le 2^N-1$;
\item[2)] $\ell_j$: Index of the optimal path starting from the node $p_j$, $1\le \ell_j\le\mu_j$, for some $\mu_j\in\mathbb{N}\setminus\{0,\}$;
\item[3)] $\pi_{j,\ell_j}$: $\ell_j$-th optimal path starting from $p_j$;
\item[4)] $\nu_{j,\ell_j}$: Ordering number of the switch in the path $\pi_{j,\ell_j}$, $1\le\nu_{j,\ell_j}\le\xi_{j,\ell_j}$, for some $\xi_{j,\ell_j}\in\mathbb{N}\setminus\{0\}$;
\item[5)] $\left(p_{j,\ell_j,\nu_{j,\ell_j}}, t_{j,\ell_j,\nu_{j,\ell_j}}\right)$: $\nu_{j,\ell_j}$-th pair (node/time) of $\pi_{j,\ell_j}$;
\item[6)] $\kappa_{j,\ell_j,\nu_{j,\ell_j}}$: Number of nodes in ${\cal I}_{p_{j,\ell_j,\nu_{j,\ell_j}}}$ such that there exists $t>t_{j,\ell_j,\nu_{j,\ell_j}}$ such that, being in $p_{j,\ell_j,\nu_{j,\ell_j}}$ at the time $t_{j,\ell_j,\nu_{j,\ell_j}}$, it is optimal to decide to switch on that node at the time $t$;
\item[7)] For all $(j,\ell_j,\nu_{j,\ell_j})$ and for all $1\le\kappa\le\kappa_{j,\ell_j,\nu_{j,\ell_j}}$ we take $\lambda_{j,\ell_j,\nu_{j,\ell_j},k}\in[0,1]$, such that
$$
\sum_{\kappa=1}^{\kappa_{j,\ell_j,\nu_{j,\ell_j}}}\lambda_{j,\ell_j,\nu_{j,\ell_j},\kappa}=1,
$$
and we denote by $p_{j,\ell_j,\nu_{j,\ell_j},\kappa}$ the corresponding node in ${\cal I}_{p_{j,\ell_j,\nu_{j,\ell_j}}}$;
\item[8)] For $0\le\nu\le\nu_{j,\ell_j}-1$ let $i_\nu$ such that $p_{j,\ell_j,\nu,i_\nu}=p_{j,\ell_j,\nu+1}$.
\end{itemize}

The statement is then: $\tilde\rho\in\psi_\varepsilon(\rho)$ if and only if we can choose the numbers $\lambda_{j,\ell_j,\nu_{j,\ell_j},\kappa}$ as in 7) such that, for all $0\le k\le 2^N-1$ and $t\in[0,T[$ we have
$$
\tilde\rho_k(t)=\sum_{(j,\ell_j,\nu_{j,\ell_j})\Big|p_{j,\ell_j,\nu_{j,\ell_j}}=p_k,\ t_{j,\ell_j,\nu_{j,\ell_j}}\le t<t_{j,\ell_j,\nu_{j,\ell_j}+1}}\left(\left(\prod_{\nu=0}^{\nu_{j,\ell_j}-1}
\lambda_{j,\ell_j,\nu,i_\nu}\right)\rho_j^0\right).
$$
\end{remark}

\begin{theorem}
\label{maintheorem}
Under all the hypotheses stated in \S2, there exists an $\ep$-mean-field equilibrium of system \eqref{mfgsys} (see Definition \ref{def:epsilon_equilibrium}).
\end{theorem}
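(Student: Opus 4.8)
The plan is to obtain $\rho_\varepsilon$ directly as a fixed point of the set-valued map $\psi_\varepsilon$ via the Kakutani--Ky Fan theorem, so that the statement reduces to checking the hypotheses of that theorem, all of which have essentially been assembled in the preceding construction. First I would recall the ambient setting: the domain $C_\varepsilon$ is, by its definition in \S\ref{esistenzamfgapp}, a nonempty, convex and ($L^2$-)compact subset of the Hilbert space $L^2([0,2^NT],[0,1])$, hence of a locally convex topological vector space, which is exactly the structural requirement of the theorem on the underlying set.

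Next I would invoke Lemma \ref{fixedpointe}: it guarantees that for every $\rho\in C_\varepsilon$ the image $\psi_\varepsilon(\rho)$ is a nonempty, convex and compact subset of $C_\varepsilon$, and that the graph of $\rho\mapsto\psi_\varepsilon(\rho)$ is closed. Since the whole range is contained in the compact set $C_\varepsilon$ and $L^2$ is metrizable, the closed-graph property upgrades to upper semicontinuity of the multi-function; this is the only point where one has to be slightly careful, and it follows from the standard fact that, for set-valued maps with values in a compact metric space, having a closed graph is equivalent to being upper semicontinuous with closed values.

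With these ingredients in place, the Kakutani--Ky Fan fixed-point theorem (see \cite{KAK}) applies directly: a nonempty convex compact set, together with an upper semicontinuous set-valued self-map having nonempty convex compact values, admits a fixed point. This yields $\rho_\varepsilon\in C_\varepsilon$ with $\rho_\varepsilon\in\psi_\varepsilon(\rho_\varepsilon)$, which is precisely the content of Definition \ref{def:epsilon_equilibrium}; together with the convex coefficients $\lambda$ realizing $\rho_\varepsilon$ as a convexification of the extremal evolutions $\rho^{\pi,\varepsilon}$, this provides the desired $\varepsilon$-mean-field equilibrium of \eqref{mfgsys}.

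I expect no genuine obstacle to remain at this stage, since Lemma \ref{fixedpointe} has already absorbed the real difficulty, namely the convexity of the images and the closedness of the graph, which in turn rested on the stability of $\varepsilon$-optimal paths under $L^2$-convergence of the mass through Proposition \ref{lipcontV}. The only verification worth spelling out explicitly is the passage from closed graph to upper semicontinuity, and the bookkeeping confirming that the fixed point indeed carries, node by node and decisional instant by decisional instant, the coefficient data needed to qualify as a solution of \eqref{mfgsys} in the sense of Definition \ref{def:epsilon_equilibrium}.
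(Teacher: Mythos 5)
Your proposal is correct and follows essentially the same route as the paper, whose proof is precisely the combination of Lemma \ref{fixedpointe} with the Kakutani--Ky Fan theorem applied on the convex compact set $C_\varepsilon$ (the paper additionally cites Remark \ref{generalsituation} to cover the general construction of $\psi_\varepsilon$). Your explicit remark on upgrading the closed-graph property to upper semicontinuity via compactness and metrizability of $C_\varepsilon$ is a useful clarification of a step the paper leaves implicit, but it is not a different argument.
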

\begin{proof}
The proof follows from Lemma \ref{fixedpointe}, Remark \ref{generalsituation} and the fixed-point Kakutani-Ky Fan Theorem.
\end{proof}

\section{On the limit $\varepsilon\to0$ and the existence and uniqueness of a mean-field equilibrium}
\label{casolimite}
In the sequel, we denote by $\rho_{\ep}$ a fixed point for $\psi_{\ep}(\rho)$, i.e., a total mass satisfying $\rho_{\ep}\in\psi_{\ep}(\rho_{\ep})$. The existence of such fixed points is proved in the previous section and now we will perform the limit procedure as $\ep\to0$, obtaining as limit $\rho\in L^2([0,T], [0, 1])^{2^N}$ such that $\rho\in\psi(\rho)$, where $\psi$ is constructed as in the previous points $(i)$--$(viii)$ with the only difference that we do not perform the approximation $P_\varepsilon$ in $(iii)$, but we just consider the function $P$, (\ref{funzioniP}). Hence $\rho$, together with its convexity coefficients, will be a solution of \eqref{mfgsys} and a mean-field equilibrium. Recall that, see Remark \ref{esistenzah}, in the non-approximated case (i.e., no $\varepsilon$-partition) the mass flow of the agents is still piecewise constant with pieces of length at least $\bar h>0$, but not necessarily based on the $\varepsilon$-partition. Hence, the construction of the multifunction $\psi$ is similar to the one of $\psi_\varepsilon$. The only difference is that in the construction of $\psi_\varepsilon$, in the switching optimization procedure, we first use the function $P$ (\ref{funzioniP}) and then adjust the switching time by $P_\varepsilon$ as described in Subsection \ref{esistenzamfgapp}, whereas, in the general non-approximated case, we just take the switching instant given by the function $P$.
\begin{definition}
\label{def:equilibrium}
A mean field equilibrium for (\ref{mfgsys}), in the case of no $\varepsilon$-partition, is a function $\rho$ such that $\rho\in\psi(\rho)$.
\end{definition}
One of the main problems in performing such a limit is the fact that the functions $t\longmapsto\tau=\varphi(t)$ (see Remark \ref{unicoarrivo}) may be multivalued, and, in particular, with a continuum (an interval, see Remark \ref{unicoarrivo}) as image of $t$. This problem was bypassed in the previous section using the time-discretizetion given by the partition $\mathcal{P}_\varepsilon$. We first assume that the functions $\varphi$ are not multivalued and we prove, in such a case, the existence of a mean-field equilibrium, that is of a function $\rho\in L^2$ such that $\rho\in\psi(\rho)$.
\begin{theorem}
\label{casoepsilon}
Under all the hypotheses stated in \S2 and assuming the single-valued feature of $\varphi$, there exists a mean-field equilibrium of system \eqref{mfgsys}, that is there exists $\rho\in L^2$ such that $\rho\in\psi(\rho)$ (see Definition \ref{def:equilibrium}).
\end{theorem}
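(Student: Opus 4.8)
The plan is to obtain the equilibrium $\rho$ as an $L^2$-limit, along a subsequence $\varepsilon\to0$, of the $\varepsilon$-equilibria $\rho_\varepsilon\in\psi_\varepsilon(\rho_\varepsilon)$ furnished by Theorem \ref{maintheorem}, and then to verify that the limit satisfies $\rho\in\psi(\rho)$ by passing the optimality relations to the limit. The single-valuedness of $\varphi$ will be used precisely to guarantee that the (a priori $\varepsilon$-discretized) switching instants converge to well-defined points, so that the piecewise-constant structure of the network flow survives the limit.

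First I would set up the compactness. By Remark \ref{esistenzah}, every $\rho_\varepsilon$ is piecewise constant with consecutive switches along any optimal path separated by at least $\bar h>0$, so each component has a number of pieces bounded by a constant $M_0$ independent of $\varepsilon$ (depending only on $N$ and $\bar h$). Hence each $\rho_\varepsilon$ is encoded by a bounded list of switching/decision instants lying in the compact set $[0,T]^{M_0}$, together with the finitely many convex coefficients $\lambda_{i,j}^\varepsilon\in[0,1]$ that determine its constant values. Extracting a subsequence, I would obtain $\tilde\tau_{i,\varepsilon}\to\tau_i$ and $\lambda_{i,j}^\varepsilon\to\lambda_{i,j}$, whence the constant values (finite products of the $\lambda$'s times the $\rho_j^0$) converge and $\rho_\varepsilon\lra\rho$ in $L^2$ for a piecewise-constant limit $\rho\in L^2([0,T],[0,1])^{2^N}$.

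Next I would push optimality through the limit, exactly as in the closed-graph argument of Lemma \ref{fixedpointe} but with the fixed partition $\cP_\varepsilon$ replaced by its vanishing mesh. By Proposition \ref{lipcontV}, $V(\cdot,\cdot,\rho_\varepsilon)\lra V(\cdot,\cdot,\rho)$ uniformly on every $[0,T-k]$, and optimal switching instants are stable under this convergence. Since the discretization $F$ in \eqref{definizioneF} displaces each genuine optimal instant by at most $\varepsilon$, the limits $\tau_i$ are optimal for $V(\cdot,\cdot,\rho)$, i.e. the corresponding pairs belong to the non-approximated $P$ in \eqref{funzioniP}. Here the single-valuedness of $\varphi$ is essential: it forces each decision instant to generate a unique switching instant, so that the sets $F(\tau_i)$ collapse to a single node in the limit and the jump locations of $\rho$ are unambiguous. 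The node sequences of the $\varepsilon$-optimal paths stabilize for small $\varepsilon$ (finiteness of the network, as in \eqref{convergenzap}) and coincide with the optimal paths for $\rho$. The limiting coefficients $\lambda_{i,j}$ then inherit the constraint of summing to $1$ at each decisional split, and, arguing as in Lemma \ref{fixedpointe}, are supported only on optimal switches: if $\lambda_{i,j}(t)>0$ then $\lambda_{i,j}^\varepsilon>0$ for small $\varepsilon$, forcing $(p_j,t)$ to be $\varepsilon$-optimal and hence, in the limit, optimal for $\rho$. Consequently $\rho$ is exactly the flow built in Steps 1--2 of \S\ref{esistenzamfgapp} from optimal data relative to $\rho$ with coefficients $\lambda_{i,j}$, which is the assertion $\rho\in\psi(\rho)$ of Definition \ref{def:equilibrium}.

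The main obstacle is the convergence of the jump locations: one must ensure that the instants $\tilde\tau_{i,\varepsilon}$ converge to single well-defined points, so that the $L^2$-limit of the piecewise-constant $\rho_\varepsilon$ is again a genuine network flow lying in the image of $\psi$. This is exactly the step that fails for multivalued $\varphi$, where, as noted after \eqref{eq:funzioniP_epsilon}, $P_\varepsilon$ may spread a single switch over an entire interval; the single-valued hypothesis removes this pathology and is what makes the passage to the limit go through.
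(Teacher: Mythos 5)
Your proposal is correct and follows essentially the same route as the paper's proof: extracting a subsequence of the $\varepsilon$-equilibria $\rho_\varepsilon$ using the uniform bound on the number of pieces of constancy (Remark \ref{esistenzah}), making the switching instants and convex coefficients $\lambda_{i,j}^\varepsilon$ converge, and then passing optimality to the limit via the closed-graph argument of Lemma \ref{fixedpointe} together with Proposition \ref{lipcontV}, with single-valuedness of $\varphi$ guaranteeing that $P_\varepsilon$ produces at most two approximated instants collapsing to the unique optimal one. No gaps; your identification of the multivalued case as the precise failure point also matches the paper's discussion in \S\ref{generalcase}.
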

\begin{proof}
First of all note that, fixed $\rho$, under the hypothesis on $\varphi$, for every decisional instant $t$ and node $p_i$, there exists a unique optimal switching instant $\tau$ for the switch to $p_j$, that is $(p_j,\tau)\in P(t,p_i)$. This fact gives that the mass evolution $\rho'\in\psi(\rho)$ is also piecewise constant and similarly constructed as in Step 2, \S\ref{esistenzamfgapp}, with the only difference that now the pieces of constancy are not fixed a priori (we do not have the partition $\mathcal{P}_\varepsilon$). Moreover, for all $\varepsilon>0$, the function $P_\varepsilon$, \eqref{eq:funzioniP_epsilon}, evaluated at $(t,p_i)$, generates at most two $\varepsilon$-approximated switching instants for the switch to $p_j$: the possible approximation $\tilde\tau_\varepsilon$ of $\tau$ by the function $F$ in \eqref{definizioneF} (and not the whole intersection of the nodes of the partition with the interval $\varphi(t)$ in the case of multivalued feature). Finally, $\tilde\tau_\varepsilon\to\tau$ as $\varepsilon\to0$. Note that, see Definition \ref{optimallygen}, the function $\varphi$, the optimally generated switching time, depends on $p_i$ and $p_j$. Here for simplicity we do not display such a dependence. The proof is made by a generic pair $(p_i, p_j)$ with $p'\in{\cal I}_{p_i}$, and of course, being such a pairs in a finite quantity, we have the uniformity of the convergence, with respect to the pair.

Now, recall that (see the beginning of \S\ref{esistenzamfgapp}) the fixed points $\rho_\varepsilon$ are piecewise constant with at most a fixed number $M$ of pieces of constancy. Hence, possibly extracting a subsequence, we can make such intervals of constancy converge as well as the corresponding values of the constants. We then get a function $\rho$ such that, up to a subsequence, $\rho_\varepsilon\to\rho$ in $L^2$. The convergence of the constant values is obviously constructed by the convergence, up to a subsequence, of the convex coefficients $\lambda_{i, j}^\varepsilon\in\mathbb{R}$ evaluated on the decisional instants and implemented at the corresponding $\varepsilon$-approximated instants as in Step 2, \S\ref{esistenzamfgapp}. Note that the decisional and switching instants are the extremal points of the intervals of constancy, and also that, being the number of possible cases finite, we can assume, up to a subsequence, that those ones are decisional and switching instants for the same switch from $p_i$ to $p_j$, i.e. for the same $i$ and $j$ for all $\varepsilon$. Finally note that $\rho_\varepsilon$, being a fixed point of $\psi_\varepsilon$, is exactly constructed by its coefficients $\lambda_{i, j}^\varepsilon$ implemented on the nodes that are generated by $\rho_\varepsilon$ itself via $P_\varepsilon$.

Arguing as in the proof of Lemma \ref{fixedpointe}, using Proposition \ref{lipcontV} and similar convergence for $\varepsilon\to0$ as in \eqref{convPep1} and \eqref{convPep2}, we get that $\rho\in\psi(\rho)$ (i.e.: $\rho$ is constructed by the coefficients $\lambda_{i, j}$ implemented on the nodes that are generated by $\rho$ itself via $P$, and moreover if the switch is not optimal, then $\lambda_{i, j}=0$).
\end{proof}
In Appendix \ref{appendixB}, we give an explicit example of possible costs that guarantee the single-valued feature of $\phi$.
\subsection{The general case: $\varphi$ multivalued}
\label{generalcase}
Without the single-valued hypothesis on $\varphi$, the passage to the limit as $\ep\to0$ is more involved. Indeed, if the image of the decisional time $t$ is an interval $[\tau^-, \tau^+]$, in the $\ep$-approximation case we discretize it through the partition $\cP_{\ep}$ and, on every node, we get a value $\l_{i, j}^{\ep}(t, \cdot)$ which composes with the others. Formally, we have a sum of weighted delta functions on the nodes of $\cP_{\ep}$ inside $[\tau^-, \tau^+]$. In the the limit as $\ep\to0$, we obtain instead a possible sum of functions $\l_{i, j}(t, \cdot)$, defined on the whole interval $[\tau^-, \tau^+]$ and other sums of delta functions. Hence the situation is more complex, including the interpretation of system \eqref{sistema2}. A rigorous investigation of this situation is going to be the subject of future works. Again considering a particular case, where $\rho_{\ep}\lra\rho$ in $L^2$ and $\rho$, via the functions $P$, generates functions $\phi$ not multivalued, then $\rho$ may be a mean-field equilibrium because the proof of Theorem \ref{casoepsilon} can be probably adapted. Also for this case the details have not been checked. However, in Appendix \ref{appendixB}, we give an explicit example of possible costs that guarantee the single-valued feature of $\phi$.
\subsection{On the uniqueness of the equilibrium}
As we said in the Introduction, the uniqueness of the equilibrium is often proved by assuming the Lasry-Lions monotonicity condition on the cost (see \cite{lasrylions}). Our problem does not immediately fit into such a property because of its deterministic and network-type features and the presence of two kinds of time-variables. Anyway, in Appendix \ref{appendix}, we try to show, by two simple examples, how a monotonicity-type condition can be promising in order to study the uniqueness of the equilibrium but the real implementation of that condition in our model is completely left to future studies.
\appendix
\section{On the uniqueness of the equilibrium}
\label{appendix}
We first recall that, as in \S\ref{casolimite}, a mean-field equilibrium is a function $\rho\in L^2$ such that $\rho\in\psi(\rho)$, which means that $\rho$ is a juxtaposed convex combination of the extremal evolutions generated by $\rho$ itself via the optimization functions $P$ \eqref{funzioniP}.

The examples we are going to show do not necessarily meet in all their aspects our model studied in previous sections. They are just inspiring examples about the possible use of the monotonicity property.
\begin{example}
\label{example1uni}
\begin{figure}[h]
\centering
\begin{tikzpicture}[very thick,decoration={
    markings,
    mark=at position 0.5 with {\arrow{>}}}
    ]
\draw[postaction={decorate}] (0,0)--(2,-2);
\draw[postaction={decorate}] (0,0)--(2,0);
\draw[postaction={decorate}] (0,0)--(2,2);
\draw[postaction={decorate}] (2,-2)--(6,0);
\draw[postaction={decorate}] (2,0)--(6,0);
\draw[postaction={decorate}] (2,2)--(6,0);
\filldraw[\colo] (0,0) circle (\rad)  node [anchor=east] {$p_0$};
\filldraw[\colo] (2,-2) circle (\rad)  node [anchor=north] {$p_3$};
\filldraw[\colo] (2,0) circle (\rad)  node [anchor=south] {$p_2$};
\filldraw[\colo] (2,2) circle (\rad)  node [anchor=south] {$p_1$};
\filldraw[\colo] (6,0) circle (\rad)  node [anchor=west] {$p_4$};
\end{tikzpicture}
\caption{The network of Example \ref{example1uni}.}\label{figure4}
\end{figure}
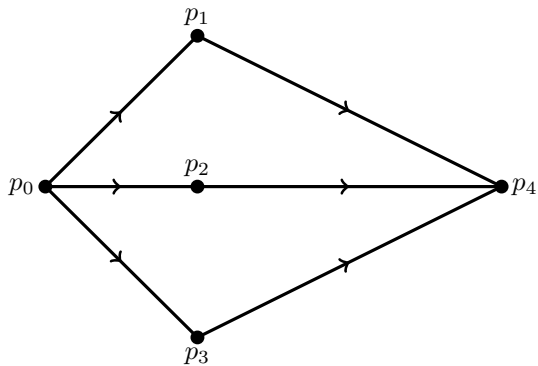
Consider the network in Figure \ref{figure4}, where the goal is to start from $p_0$ and to arrive to $p_4$, along the three possible paths: $p_0\to p_1\to p_4$, $p_0\to p_2\to p_4$ and $p_0\to p_3\to p_4$. Moreover, we suppose that all the agents at the time $t=0$ are in $p_0$, that at the time $t=1$ they are all forced to switch to one of the three nodes $p_1$, $p_2$ and $p_3$, and that at the time $t=T=2$ they are all forced to switch to $p_4$, ending the game. Since the switching instants are fixed and the significant nodes are just $p_1$, $p_2$ and $p_3$, we only give the cost of stay on such nodes respectively, independently of time: $C_1(\rho_1)=\rho_1, C_2(\rho_2)=2\rho_2, C_3(\rho_3)=3\rho_3$, where $\rho_i$ is the mass in the node $p_i$. In this case a mean-field equilibrium is given by $(\lambda_1, \lambda_2,\lambda_3)=(6/11,3/11,2/11)$, which means that, denoted by $\rho_0$ the initial distribution in $p_0$, at time $t=1$ the fraction $\lambda_i\rho_0$ switches to the node $p_i$, $i=1,2,3$. Indeed, with these fractions all the costs $C_1,C_2,C_3$ are equal to $(6/11)\rho_0$. Hence if all the agents in $p_0$ conjecture such a distribution, then all the possible generated extremal distributions are the following ones: $(\rho_0,0,0), (0,\rho_0,0), (0,0,\rho_0)$, that is all the switches are optimal. The actual mass $(\lambda_1\rho_0,\lambda_2\rho_0,\lambda_3\rho_0)$ is then a convex combination of the generated extremal distributions with convex coefficients $(\lambda_1,\lambda_2,\lambda_3)$, and hence it is a mean field equilibrium. By linearity of the costs, the coefficients $\lambda_i$ are easily calculated by imposing $C_1(\lambda_1)=C_2(\lambda_2)=C_3(\lambda_3)$ with the constraint $\lambda_i\in[0,1]$ and $\lambda_1+\lambda_2+\lambda_3=1$, and they are the only ones satisfying the system and the constraint. Note that if, for example, we are looking for a possible equilibrium using just the nodes $p_1$ and $p_2$, that is we look for $\lambda_1,\lambda_2\ge0$, $\lambda_1+\lambda_2=1$ and $C_1(\lambda_1)=C_2(\lambda_2)$, we find $\lambda_1=2/3, \lambda_2=1/3$ and then we have the distribution $(\lambda_1\rho_0,\lambda_2\rho_0,0)$. But such a distribution is not an equilibrium because it gives the costs $\left((2/3)\rho_0,(2/3)\rho_0,0\right)$, which generates the only extremal distribution $(0,0,\rho_0)$: all agents switch to $p_3$. And $(\lambda_1\rho_0,\lambda_2\rho_0,0)$ is not a convex combination of (i.e., is not equal to) the singleton $\{(0,0,\rho_0)\}$. The problem then has a unique equilibrium which is given by $((6/11)\rho_0,(3/11)\rho_0,(2/11)\rho_0)$.

Note that, whenever we find a triple of convex coefficients $(\lambda_1,\lambda_2,\lambda_3)$ such that $C_1(\lambda_1)=C_2(\lambda_2)=C_3(\lambda_3)$, then the corresponding distribution $(\lambda_1\rho_0,\lambda_2\rho_0,\lambda_3\rho_0)$ is an equilibrium because it gives the same costs along any path, and then generates all the extremal distributions $(\rho_0,0,0),(0,\rho_0,0),(0,0,\rho_0)$ of which it is a convex combination. The question about uniqueness is then: given three functions $C_i:[0,1]\to\mathbb{R}$, $i=1,2,3$, under which condition there exists at most one triple of convex coefficients $(\lambda_1,\lambda_2,\lambda_3)$ such that
\begin{equation}
\label{eq:equality}
C_1(\lambda_1\rho_0)=C_2(\lambda_2\rho_0)=C_3(\lambda_3\rho_0)\text{?}
\end{equation}
A condition that guarantees such a uniqueness is the following monotonicity property which is, in our discrete case, the condition in \cite{lasrylions}:
\begin{equation}
\label{eq:monotonicity}
\begin{array}{ll}
\displaystyle
\sum_{i=1}^3\left(C_i(\lambda'_i\rho_0)-C_i(\lambda_i''\rho_0)\right)(\lambda_i'-\lambda_i'')>0\\
\displaystyle
\text{for all }(\lambda'_1,\lambda'_2,\lambda'_3)\neq(\lambda''_1,\lambda''_2,\lambda''_3)\ \mbox{convex triples and for any }\rho_0>0.
\end{array}
\end{equation}
Indeed, let us suppose that there are two convex triples
$$
(\lambda_1',\lambda_2',\lambda_3')=(\lambda_1',\lambda_2',1-\lambda'_1-\lambda_2'),\ (\lambda_1'',\lambda_2'',\lambda_3'')=(\lambda_1'',\lambda_2'',1-\lambda''_1-\lambda_2'')
$$
satisfying \eqref{eq:equality}, and denoting by $C', C''$ the common costs, for the single triple respectively, we obtain
$$
\sum_{i=1}^3(C_i'-C_i')(\lambda_i'-\lambda_i'')=(C'-C'')\sum_{i=1}^2(\lambda'_i-\lambda''_i)+(C'-C'')(1-\lambda_1'-\lambda_2'-1+\lambda_1''+\lambda_2'')=0
$$
and hence, by \eqref{eq:monotonicity}, $(\lambda_1',\lambda_2',\lambda_3')=(\lambda_1'',\lambda_2'',\lambda_3'')$.
\end{example}
\begin{figure}[h]
\centering
\begin{tikzpicture}[very thick,decoration={
    markings,
    mark=at position 0.5 with {\arrow{>}}}
    ]
\draw[postaction={decorate}] (0,0)--(3,-2);
\draw[postaction={decorate}] (0,0)--(1,1);
\draw[postaction={decorate}] (3,-2)--(6,0);
\draw[postaction={decorate}] (3,2)--(6,0);
\draw[postaction={decorate}] (1,1)--(3,2);
\draw[postaction={decorate}] (1,1)--(3,0);
\draw[postaction={decorate}] (3,0)--(6,0);
\filldraw[\colo] (0,0) circle (\rad)  node [anchor=east] {$p_0$};
\filldraw[\colo] (3,0) circle (\rad)  node [anchor=north] {$p_4$};
\filldraw[\colo] (3,2) circle (\rad)  node [anchor=south] {$p_3$};
\filldraw[\colo] (3,-2) circle (\rad)  node [anchor=north] {$p_1$};
\filldraw[\colo] (1,1) circle (\rad)  node [anchor=east] {$p_2$};
\filldraw[\colo] (6,0) circle (\rad)  node [anchor=west] {$p_5$};
\end{tikzpicture}
\caption{The network of Example \ref{example2uni}.}\label{figure5}
\end{figure}
\begin{example}
\label{example2uni}
Consider the network in Figure \ref{figure5}. The goal is to start from $p_0$ and to reach $p_5$ among one of the possible paths $p_0\to p_1\to p_5$, $p_0\to p_2\to p_3\to p_5$ and $p_0\to p_2\to p_4\to p_5$. Again, the agents at $t=0$ are all in $p_0$, with distribution $\rho_0$, at time $t=1$ they are forced to switch to $p_1$ or to $p_2$, at time $t=3/2$ the agents in $p_2$ are forced to switch to $p_3$ or $p_4$ and at the time $t=T=2$ they are all forced to switch to $p_5$. The costs are $C_1(\rho_1)=\rho_1$, $C_2(\rho_2)=4\rho_2$, $C_3(\rho_3)=3\rho_3$, $C_4(\rho_4)=2\rho_4$. Moreover, the costs are also multiplied by the amount of the time spent on the node. We denote by $(\lambda_1,\lambda_2,\lambda_{2,3},\lambda_{2,4})$ the coefficients of a possible equilibrium, that is: at $t=1$ the fraction given by $\lambda_1\rho_0$ switches to $p_1$ and the fraction given by $\lambda_2\rho_0$ switches to $p_2$; at time $t=3/2$, the fraction $\lambda_2\lambda_{2,3}\rho_0$ switches from $p_2$ to $p_3$ and the fraction $\lambda_2\lambda_{2,4}\rho_0$ switches from $p_2$ to $p_4$. Still by linearity of the costs, such coefficients are founded by solving
\begin{equation}
\label{eq:system_costs}
\begin{cases}
2\lambda_2+\frac{3}{2}\lambda_2\lambda_{2,3}=\lambda_1\\
2\lambda_2+\lambda_2\lambda_{2,4}=\lambda_1\\
\lambda_1+\lambda_2=\lambda_{2,3}+\lambda_{2,4}=1
\end{cases},
\end{equation}
which corresponds to, taking also account of the time spent on the node,
\begin{equation}
\label{eq:system_costs_2}
\begin{cases}
\frac{C_2(\lambda_2\rho_0)}{2}+\frac{C_3(\lambda_2\lambda_{2,3}\rho_0)}{2}=C_1(\lambda_1\rho_0)\\
\frac{C_2(\lambda_2\rho_0)}{2}+\frac{C_4(\lambda_2\lambda_{2,4}\rho_0)}{2}=C_1(\lambda_1\rho_0)\\
\lambda_1+\lambda_2=\lambda_{2,3}+\lambda_{2,4}=1
\end{cases}.
\end{equation}
From \eqref{eq:system_costs}, we obtain the unique solution
$$
(\lambda_1,\lambda_2,\lambda_{2,3},\lambda_{2,4})=\left(\frac{13}{18},\frac{5}{18},\frac{2}{5},\frac{3}{5}\right).
$$
This is an equilibrium because it generates the distribution
\begin{equation}
\label{eq:distribution}
\left(\frac{13}{18}\rho_0,\frac{5}{18}\rho_0,\frac{1}{9}\rho_0,\frac{1}{6}\rho_0\right),
\end{equation}
which gives the cost, for each one of the three paths, equal to $13/18$. Hence all the paths are equivalent and the distribution generates all the possible extremal evolutions $(\rho_0,0,0,0)$, $(0,\rho_0,\rho_0,0)$, $(0,\rho_0,0,\rho_0)$ of which \eqref{eq:distribution} is a juxtaposed convex combination.

Similarly as in \eqref{eq:monotonicity}, the uniqueness of the solution of \eqref{eq:system_costs_2} is guaranteed by the following monotonicity conditions
\begin{equation}
\label{eq:meglio}
\begin{array}{ll}
\displaystyle
\left\{
\begin{array}{ll}
\displaystyle
\sum_{i=3}^4\left(C_i(\lambda\lambda'_{2,i}\rho_0)-C_i(\lambda\lambda_{2,i}''\rho_0)\right)(\lambda_{2,i}'-\lambda_{2,i}'')>0\quad\text{for every }\lambda>0,\\
\displaystyle
\end{array}
\right.\\
\displaystyle
\left\{
\begin{array}{ll}
\displaystyle
\left(C_1(\lambda'_1\rho_0)-C_1(\lambda''_1\rho_0)\right)(\lambda_1'-\lambda_1'')\\
\displaystyle
+\frac{1}{2}\left(C_2(\lambda'_2\rho_0)+C_3(\lambda_2'\lambda_{2,3}'\rho_0)-C_2(\lambda_2''\rho_0)-C_3(\lambda_2''\lambda_{2,3}''\rho_0)\right)(\lambda_2'-\lambda_2'')>0
\end{array}
\right.\\
\displaystyle
\ \ \forall(\lambda'_1,\lambda'_2)\neq(\lambda''_1,\lambda''_2),\ (\lambda_{2,3}',\lambda_{2,4}')\neq(\lambda_{2,3}'',\lambda_{2,4}'')\ \mbox{convex pairs and }\rho_0>0.
\end{array}
\end{equation}
Indeed, by the second inequality we have the uniqueness of the pair of convex coefficients $(\l_1, \l_2)$, which, putting $\l=\l_2$ in the first inequality, gives the uniqueness of the pair $(\lambda_{2, 3}, \lambda_{2, 4})$.
\end{example}
\begin{remark}
Similarly as in Example \ref{example1uni} (see Figure \ref{figure4}) when the number of the nodes is $n$ instead of $3$, the uniqueness of the $n$-string of convex coefficients satisfying $C_i(\lambda_i\rho_0)=C_j(\lambda_j\rho_0)$ for all $i,j=1,\ldots,n$ is guaranteed by the monotonicity conditions as \eqref{eq:monotonicity}, replacing $n=3$ by the generic $n$. As seen in Example \ref{example2uni} (see Figure \ref{figure5}), in the case of more complex networks, the conditions are much more involved and less treatable, because of the peculiar characteristics of the problem. The topology of the network in fact strongly affects the monotonicity property, the way of representing it and, ultimately, its applicability. However, we point out that if all the single costs $C_i$ are strictly monotone, then they will certainly satisfy the corresponding monotonicity property.
\end{remark}
\begin{remark}
In the two examples here presented, the switching instants are a-priori fixed for all agents, and hence they do not enter in the optimization process performed by the single agent. In our model, in the previous sections, we instead consider also the switching time as well as the decisional time as part of the control for the agents, and the costs also depend on them. This fact obviously makes the situation much more complicated in order to establish a reasonable condition for the uniqueness of the mean-field game.
\end{remark}
\begin{remark}
The monotonicity conditions \eqref{eq:monotonicity} and \eqref{eq:meglio} and their possible generalization to more complicated networks, only guarantee the  uniqueness of the possible $n$-string of convex coefficients but not, in general, its existence. Note that, if the (unique) solution presents some $\lambda_i=0$, then it means that the corresponding node will be not reached by the equilibrium, but anyway, even with zero mass, that node produces the same cost as the others. Moreover, we may not have existence of the $n$-string convex solution. Looking at Example \ref{example1uni} (generalized to $n$ intermediate nodes), this means that we do not have a $n$-string which gives the fraction of mass switching to the $n$ nodes. This means that there is at least a node which must be not considered in the game from the beginning. For example, a node $p_i$ such that $C_i(\lambda_i\rho_0)>C_j(\lambda_j\rho_0)$ for all $j\neq i$ and $\lambda_i,\lambda_j$: it is a too expensive node, no one will switch to it. In this situation, the actual game is with just $n-1$ nodes and not with $n$. Hence, one must look for a possible unique $(n-1)$-string of convex combination solving the corresponding problem without that node. Proceeding in this way, one can find a possible unique $m$-string, and will set the other components to $0$: no flow through such nodes. However note that in our model, in the previous sections, we have also the time spent on the node at our disposal, which possibly modulate the paid cost, and hence the situation is more flexible but less prone to have a good condition for uniqueness.
\end{remark}
The points and the questions of these last remarks are certainly worth investigating and may be the argument of future studies.
\section{On the convexity of $V$ and single-valued feature of $\phi$}
\label{appendixB}
Let us assume
$$
C(p,p',t,\tau,\rho)=\frac{\bar C(p,p',\rho)}{\tau-t},\quad\tau\longmapsto\tilde C(\bar p,\tau)\ \text{strictly decreasing}.
$$
In particular, $\bar C$ does not explicitly depend on $t$ and $\tau$, for example
$$
\bar C(p,p',\rho)=\frac{a(p)}{T}\int_0^T\rho_{p}(s)ds+\frac{a(p')}{T}\int_0^T\rho_{p'}(s)ds
$$
for some weight $p\longmapsto a(p)$. A possible strictly non-decreasing $\tilde C$ is $\tilde C(\bar p,\tau)=T-\tau$.

Let $p_{1}$ be a node directly linked to $\bar p$, i.e. $\sum_ip_1^i=N-1$, and let $t<T$. Hence we have
$$
V(p_{1},t)=\inf_{\tau\in]t,T]}\left\{\frac{\bar C(p_{1},\bar p,\rho)}{\tau-t}+\tilde C(\bar p,\tau)\right\}=\frac{\bar C(p_{1},\bar p,\rho)}{T-t}+\tilde C(\bar p,T).
$$
Therefore, $t\longmapsto V(p_{1},t)$ is strictly convex and $\varphi(t)=T$ is single-valued.

Now, let $p_{2}$ be a node linked to $\bar p$ with two switches, i.e. $\sum_ip_2^i=N-2$, and let $p_{1}\in{\cal I}_{p_{2}}$ and $t<T$. We consider the function
$$
\psi_{p_2,p_1}:]t,T[\ni\tau\longmapsto V(p_{1},\tau)+\frac{\bar C(p_2,p_1,\rho)}{\tau-t}=\frac{\bar C(p_{1},\bar p,\rho)}{T-\tau}+\tilde C(\bar p,T)+\frac{\bar C(p_2,p_1,\rho)}{\tau-t}.
$$
Note that $\lim_{\tau\to t^+}\psi_{p_2,p_1}(\tau)=\lim_{\tau\to T^-}\psi_{p_2,p_1}(\tau)=+\infty$. Hence, the minimization problem
$$
\inf_{\tau\in]t,T[}\psi_{p_2,p_1}(\tau)
$$
has a solution $\varphi_{p_2,p_1}(t)\in]t,T[$ and it must be
\begin{equation}
\label{eq:firstorder}
\frac{\bar C(p_1,\bar p,\rho)}{(T-\varphi_{p_2,p_1}(t))^2}-\frac{\bar C(p_2,p_1,\rho)}{(\varphi_{p_2,p_1}(t)-t)^2}=0,
\end{equation}
which gives a unique possible point of minimum
$$
\varphi_{p_2,p_1}(t)=\frac{\sqrt{\frac{\bar C(p_2,p_1,\rho)}{\bar C(p_1,\bar p,\rho)}}T+t}{\sqrt{\frac{\bar C(p_2,p_1,\rho)}{\bar C(p_1,\bar p,\rho)}}+1}\in]t,T[,
$$
and note that $\varphi$ is strictly increasing and linear and hence derivable. Moreover, its derivative satisfies
\begin{equation}
\label{eq:lessthan1}
0<\varphi'_{p_2,p_1}(t)<1.
\end{equation}
We now consider the function
$$
V_{p_2,p_1}:t\longmapsto\psi_{p_2,p_1}(\varphi_{p_2,p_1}(t))=\frac{\bar C(p_{1},\bar p,\rho)}{T-\varphi_{p_2,p_1}(t)}+\tilde C(\bar p,T)+\frac{\bar C(p_2,p_1,\rho)}{\varphi_{p_2,p_1}(t)-t},
$$
which represents the optimum when, being in $p_2$ at time $t$, the agent decides that it will switch to $p_1$ before $T$, that is it will perform the path $p_2\to p_1\to \bar p$. Such a function is then twice derivable and it is strictly convex in $]0,T[$. Indeed, taking account of (\ref{eq:firstorder}) and (\ref{eq:lessthan1}), it is
$$
V''_{p_2,p_1}(t)=\frac{2\bar C(p_2,p_1,\rho)(\varphi_{p_2,p_1}(t)-t)(1-\varphi'_{p_2,p_1}(t))}{(\varphi_{p_2,p_1}(t)-t)^4}>0.
$$
Note that we do not need the second derivative of $\varphi_{p_2,p_3}$ (even if it exists, in our example) because in the calculation of $V'_{p_2,p_1}$ it cancels in view of (\ref{eq:firstorder}). Finally, note that $\lim_{t\to T^-}V_{p_2,p_1}(t)=+\infty$.

Now, we take $p_3$ such that $p_2\in{\cal I}_{p_3}$ and consider the function
$$
V_{p_3,p_1,p_1}:t\longmapsto\inf_{\tau\in]t,T[}\left\{V_{p_2,p_1}(\tau)+\frac{\bar C(p_3,p_2,\rho)}{\tau-t}\right\},
$$
which represents the optimum when, being in $p_3$ at time $t$, the agent decides that it will perform the path $p_3\to p_2\to p_1\to\bar p$. Note that the function $]t,T[\ni\tau\longmapsto\psi_{p_3,p_2,p_1}$, inside the minimization, is twice derivable and satisfies $\lim_{\tau\to t^+}\psi_{p_3,p_2,p_1}(\tau)=\lim_{\tau\to T^+}\psi_{p_3,p_2.p_4}(\tau)=+\infty$. Hence the minimization process has a solution $\varphi_{p_3,p_2,p_1}(t)\in]t,T[$, and such a solution is unique. Indeed, again, it must be
\begin{equation}
\label{eq:firstorder2}
V'_{p_2,p_1}(\varphi_{p_3,p_2,p_1}(t))=\frac{\bar C(p_3,p_2,\rho)}{(\varphi_{p_3,p_2,p_1}(t)-t)^2}.
\end{equation}
Whereas $\tau\longmapsto V'_{p_2,p_1}(\tau)$ is strictly increasing (being $V_{p_2,p_1}$ strictly convex) and $\tau\longmapsto\bar C(p_3,p_2,\rho)/(\tau-t)^2$ is strictly decreasing, the solution $\varphi_{p_3,p_2,p_1}(t)\in]t,T[$ is unique. Moreover, by the Implicit Function Theorem, $\varphi_{p_3,p_2,p_1}$ is derivable. Differentiating the equality (\ref{eq:firstorder2}), we get (we write $\varphi$ for $\varphi_{p_3,p_2,p_1}$)
$$
\left(V''_{p_2,p_1}(\varphi(t)+\frac{2\bar C(p_3,p_1,\rho)(\varphi(t)-t)}{(\varphi(t)-t)^4}\right)\varphi'(t)=\frac{2\bar C(p_3,p_2,\rho)(\varphi(t)-t)}{(\varphi(t)-t)^4},
$$
from which, being $V''_{p_2,p_1}>0$ and $\varphi(t)>t$, we get
\begin{equation}
\label{eq:lessthan12}
0<\varphi'_{p_3,p_2,p_1}(t)<1
\end{equation}
and in particular $\varphi_{p_1,p_2,p_3}$ is strictly increasing. Now, we prove that (still denoting $\varphi_{p_3,p_2,p_1}$ by $\varphi$)
$$
t\longmapsto V_{p_3,p_2,p_1}(t)=V_{p_2,p_1}(\varphi(t))+\frac{\bar C(p_3,p_2,\rho)}{\varphi(t)-t}
$$
is strictly convex. Indeed, differentiating two times, taking account of (\ref{eq:firstorder2}) and (\ref{eq:lessthan12}), we get again
$$
V''_{p_3,p_2,p_1}(t)=\frac{2\bar C(p_3,p_2,\rho)(\varphi(t)-t)(1-\varphi'(t))}{(\varphi(t)-t)^4}>0.
$$
Again, note that we do not need the second derivative of $\varphi$ (even if it exists, in our example) because in the calculation of $V'_{p_3,p_2,p_1}$ it cancels in view of (\ref{eq:firstorder2}). Finally note that $\lim_{t\to T^-}V_{p_3,p_2,p_1}(t)=+\infty$.

Proceeding in this way we obtain that, for every path $p_n\to p_{n-1}\to\cdots\to p_1\to\bar p$, the function
$$
V_{p_n,p_{n-1},\ldots,p_1}(t)=\inf_{\tau\in]t,T[}\left\{V_{p_{n-1},\ldots,p_1}(t)+\frac{\bar C(p_n,p_{n-1},\rho)}{\tau-t}\right\}
$$
is realized by a unique $\tau=\varphi_{p_n,\ldots,p_1}(t)\in]t,T[$, it is strictly convex, and $\varphi_{p_n,\ldots,p_1}$ is strictly increasing with derivative less than $1$.

We finally obtain that the value function, for all $p\neq\bar p$ and  $t<T$,
$$
V(p,t)=\inf_{\substack{\tau\in]t,T]\\p'\in{\cal I}_p}}\left\{V(p',\tau)+\frac{\bar C(p,p',\rho)}{\tau-t}\right\},
$$
is realized by a unique, strictly increasing (for $t$ such that $\varphi(t)>T$) single-valued function $t\longmapsto\tau=\varphi(t)\in]t,T]$, giving the optimal instant $\tau\in]t,T]$ for switching to the optimal node $p'\in{\cal I}_p$.


\begin{thebibliography}{99}
\bibitem{Gomes1}
\newblock F. Al Saleh, T. Bakaryan, D. A. Gomes and R. Ribeiro,
\newblock First-order mean-field games on networks and Wardrop equilibrium,
\newblock https://arxiv.org/abs/2207.01397, (2022).


\bibitem{BFMP}
\newblock F. Bagagiolo, S. Faggian, R. Maggistro and R. Pesenti,
\newblock Optimal Control of the Mean Field Equilibrium for a Pedestrian Tourists' Flow Model,
\newblock \emph{Networks and Spatial Economics}, (2019).

\bibitem{bagfesmar1}
\newblock F. Bagagiolo, A. Festa and L. Marzufero, 
\newblock A hybrid control framework for an optimal visiting problem, 
\newblock \emph{IFAC-PapersOnLine}, \textbf{54}(5) (2021), 241-246.

\bibitem{bagfesmar2}
\newblock F. Bagagiolo, A. Festa and L. Marzufero
\newblock Hybrid control for optimal visiting problems for a single player and for a crowd,
\newblock \emph{Nonlinear Differential Equations and Applications}, \textbf{29}(1), 4 (2022).

\bibitem{bagfesmar3}
\newblock F. Bagagiolo, A. Festa and L. Marzufero, 
\newblock The orienteering problem: a hybrid control formulation,
\newblock \emph{IFAC-PapersOnLine}, \textbf{54}(5) (2021), 175-180.

\bibitem{bagmagpes}
\newblock F. Bagagiolo, R. Maggistro and R. Pesenti,
\newblock Origin-to-destination network flow with path preferences and velocity controls: a mean field game-like approach,
\newblock \emph{Journal of Dynamics and Games}, \textbf{8}(4) (2021), 359-380.

\bibitem{bertucci2}
\newblock C. Bertucci,
\newblock Fokker-Planck equations of jumping particles and mean field games of impulse control,
\newblock \emph{Annales de l'Institut Henri Poincar\'e C, Analyse Non Lin\'eaire}, \textbf{37}(5) (2020), 1211-1244.

\bibitem{bertucci1}
\newblock C. Bertucci,
\newblock Optimal stopping in mean field games, an obstacle problem approach,
\newblock \emph{Journal de Math\'ematiques Pures et Appliqu\'ees}, \textbf{120} (2018), 165-194.

\bibitem{bovy}
\newblock P. H. L. Bovy and S. P. Hoogendoorn,
\newblock Pedestrian Travel Behavior Modeling,
\newblock \emph{Networks and Spatial Economics}, \textbf{5}(2) (2005), 139-216.

\bibitem{camilli1}
\newblock F. Camilli, E. Carlini and C. Marchi,
\newblock A model problem for Mean Field Games on networks,
\newblock \emph{Discrete and Continuous Dynamical Systems}, \textbf{35}(9) (2015), 4173-4192.

\bibitem{camilli2}
\newblock F. Camilli, R. De Maio and A. Tosin,
\newblock Transport of measures on networks,
\newblock \emph{Networks and Heterogeneous Media}, \textbf{12}(2) (2017), 191-215.

\bibitem{Carda}
\newblock P. Cardaliaguet and P. J. Graber,
\newblock Mean field games systems of first order,
\newblock \emph{ESAIM Control, Optimisation and Calculus of Variations}, \textbf{21}(3) (2015), 690-722.

\bibitem{cri}
\newblock P. Centorrino, A. Corbetta, E. Cristiani and E. Onofri,
\newblock Managing crowded museums: Visitors flow measurement, analysis, modeling, and optimization,
\newblock Journal of Computational Science, \textbf{53} (2021).

\bibitem{coflow}
\newblock M. Chowdhury, S. Yang, S. Khuller, J. You and M. Purohit,
\newblock Near Optimal Coflow Scheduling in Networks,
\newblock \emph{SPAA '19: The 31st ACM Symposium on Parallelism in Algorithms and Architectures}, (2019), 123-134.

\bibitem{EG}
\newblock L. C. Evans and R. F. Gariepy,
\newblock \emph{Measure theory and fine properties of functions},
\newblock Revised edition, CRC press, USA, 2015.

\bibitem{festa2018mean}
\newblock A. Festa and S. G\"ottlich,
\newblock A mean field game approach for multi-lane traffic management,
\newblock \emph{IFAC-PapersOnLine}, \textbf{5}(32) (2018), 793-798.

\bibitem{Gomes}
\newblock D. A. Gomes and J. Sa\`ude,
\newblock Mean Field Games Models -- A Brief Survey,
\newblock \emph{Dynamic Games and Applications}, \textbf{4}(2) (2014), 110-154.

\bibitem{GS}
\newblock T. Gonzalez and S. Sahni,
\newblock Open Shop Scheduling to Minimize Finish Time,
\newblock \emph{Journal of the ACM}, \textbf{23}(4) (1976), 665-679.

\bibitem{huamalcai}
\newblock M. Huang, R. P. Malham\'e and P. E. Caines,
\newblock Large population stochastic dynamic games: Closed-loop McKean-Vlasov systems and the Nash certainly equivalence principle,
\newblock \emph{Communications in Information and Systems}, \textbf{6}(3) (2006), 221-252.

\bibitem{laslio}
\newblock J. M. Lasry and P. L. Lions,
\newblock Juex \`a champ moyen. II -- Horizon fini et contr\^ole optimal,
\newblock \emph{Comptes Rendus Mathematique}, \textbf{343}(10) (2006), 679-684.

\bibitem{lasrylions}
\newblock J. M. Lasry and P. L. Lions,
\newblock Mean field games,
\newblock \emph{Japanese Journal of Mathematics}, \textbf{2}(1) (2007), 229-260.

\bibitem{impulsive}
\newblock X. Li, S. Song,
\newblock \emph{Impulsive Systems with Delays: Stability and Control},
\newblock Springer, Singapore, 2022.

\bibitem{mor}
\newblock M. Morandotti and F. Solombrino,
\newblock Mean-field Analysis of Multipopulation Dynamics with Label Switching,
\newblock \emph{SIAM Journal on Mathematical Analysis}, \textbf{52}(2) (2020), 1427-1462.

\bibitem{KAK}
\newblock V. Pata,
\newblock \emph{Fixed Point Theorems and Applications},
\newblock Springer, 2019.

\bibitem{Pinedo}
\newblock M. L. Pinedo,
\newblock \emph{Scheduling: Theory, Algorithms, and Systems},
\newblock 5rd Edition, Springer, Boston, 2012. 
\end{thebibliography}
\end{document}